\date{Mars 2019}
\newtheorem{principle}[subsection]{Principle}
\newtheorem{lemma}[subsection]{Lemma}
\newtheorem{theorem}[subsection]{Theorem}
\newtheorem{problem}[subsection]{Problem}
\newtheorem{conjecture}[subsection]{Conjecture}
\newtheorem{proposition}[subsection]{Proposition}
\renewcommand{\leq}{\leqslant}
\renewcommand{\geq}{\geqslant}
\newcommand{\E}{\mathbb{E}}
\newcommand{\R}{\mathbb{R}}
\newcommand{\N}{\mathbb{N}}
\newcommand{\p}{\mathbb{P}}
\newcommand{\meas}{\mathrm{meas}}
\title{The Riemann zeta function in short intervals}
\author{Adam J Harper}
\address{Mathematics Institute, Zeeman Building, \\
University of Warwick, Coventry, \\
CV4 7AL, England}
\email{A.Harper@warwick.ac.uk}
\begin{document}

\maketitle

\section*{Introduction}
The Riemann zeta function $\zeta(s)$ is one of the most important and fascinating functions in mathematics. When the complex number $s$ has $\Re(s) > 1$, we have
$$ \zeta(s) = \sum_{n=1}^{\infty} \frac{1}{n^s} = \prod_{\text{primes} \; p} \Bigl(1 - \frac{1}{p^s} \Bigr)^{-1} , $$
and already from these equivalent expressions we see some of the key themes that dominate the study of $\zeta(s)$. 

Firstly, since $\zeta(s)$ is given by a {\em Dirichlet series} over all natural numbers $n$, without any difficult number theoretic coefficients, we can hope to use general analytic methods to obtain information about $\zeta(s)$. For example, one could hope to approximate $\sum_{n=1}^{\infty} \frac{1}{n^s}$ or its partial sums by an integral. In this way, one can extend the definition of $\zeta(s)$ to all $\Re(s) > 0$, and with more work to the entire complex plane. It turns out that this analytic continuation of $\zeta(s)$ is meromorphic, with only a simple pole at $s=1$. Furthermore, when $\Re(s) > 0$ the zeta function is the sum $\sum_{n \leq X} \frac{1}{n^s}$ plus some easily understood other terms, for suitable $X = X(s)$.

Secondly, since $\zeta(s)$ is given by an {\em Euler product} over all primes $p$, we can hope to use results about the zeta function to draw conclusions about the distribution of primes. One can also go in the reverse direction, and hope to put in information about the primes to deduce things about the zeta function (from which, perhaps, we will later deduce other number theoretic information that we didn't have before). In this article we will discuss various results of this nature.

Thirdly, note that the Euler product is absolutely convergent when $\Re(s) > 1$, and none of the individual factors $(1 - \frac{1}{p^s})^{-1}$ vanish, so we have $\zeta(s) \neq 0$ when $\Re(s) > 1$. It is well known that the zeros of the zeta function encode number theoretic information, and here one can glimpse why--- if one knows that $\zeta(s)$ doesn't vanish in a certain part of the complex plane, this suggests that something like the Euler product formula persists there, which implies something about the regularity of the primes. Again there is a kind of duality, since not only does the non-vanishing of zeta imply results about primes and products, but our methods for proving non-vanishing tend to involve establishing the influence of some kind of product formula in the region under study.

\medskip
The most interesting subset of the complex plane on which to study $\zeta(s)$ is the {\em critical line} $\Re(s) = 1/2$. Thus the {\em Riemann Hypothesis} (RH), possibly the most famous unsolved problem in pure mathematics, conjectures that if $\zeta(s)=0$ then either $s=-2,-4,-6,\dots$ (the so-called trivial zeros), or else $\Re(s) = 1/2$. This is known to be equivalent to the estimate
$$ \Bigl|\#\{p \leq x : p \; \text{prime}\} - \int_{2}^{x} \frac{dt}{\log t}\Bigr| \ll x^{1/2} \log x $$
for the counting function of the primes (RH holds if and only if this estimate
holds for all large $x$). For any fixed $\sigma > 1/2$, it is believed (and to
some extent known) that the values taken by $\zeta(\sigma+it)$ have a rather
simple behaviour: for example, $\zeta(\sigma+it)$ can only attain unusually
large values as a result of ``conspiracies'' in the behaviour of $p^{-it}$ for
small primes $p$. As we shall discuss extensively later, the situation on the
critical line is very different. All the appearances of 1/2 here reflect the
fact that in a random process, the typical size of fluctuations is like the
square root of the variance. The extent to which $\zeta(s)$ behaves like various random objects, especially random objects related to the Euler product, is another key theme that we are going to explore.

\medskip
Our goal in this paper is to survey some recent work on the behaviour of $\zeta(1/2+it)$ in short intervals of $t$. In particular, we shall describe a conjecture of Fyodorov, Hiary and Keating~\cite{fyodhiarykeat, fyodkeat} about the size of $\max_{0 \leq h \leq 1} |\zeta(1/2 + it + ih)|$ as $t$ varies, and we shall explain some results that have been proved in the direction of this conjecture by Najnudel~\cite{najnudel} and by Arguin--Belius--Bourgade--Radziwi\l\l--Soundararajan~\cite{abbrs}.

This paper is organised as follows. Firstly we shall set out some Basic Principles that will guide our thinking and arguments about the zeta function. Then, to illustrate the use of these principles and to compare with the later case of $\max_{0 \leq h \leq 1} |\zeta(1/2 + it + ih)|$, we shall describe what is known about the value distribution of $\zeta(1/2+it)$ (without any maximum) and what is known about the ``long range'' maximum $\max_{T \leq t \leq 2T} |\zeta(1/2 + it)|$. Next we shall introduce and motivate the conjecture of Fyodorov--Hiary--Keating, primarily using our Basic Principles rather than the random matrix theory/statistical mechanics arguments originally considered by those authors (although we shall mention those briefly). And then we shall discuss the statements and proofs of the results of Najnudel and of Arguin--Belius--Bourgade--Radziwi\l\l--Soundararajan, again seeing how these correspond to very nice implementations of those principles.

\medskip
{\em Notation.} We shall use various notation, of a fairly standard kind, to aid in the description of estimates and limiting processes. We write $f(x) = O(g(x))$, or $f(x) \ll g(x)$, to mean that there exists some constant $C$ such that $|f(x)| \leq Cg(x)$ for all $x$ of interest (the range of $x$ should always either be clear, or specified explicitly). We write $f(x) = \Theta(g(x))$, or $f(x) \asymp g(x)$, to mean that there exist constants $0 < c < C$ such that $cg(x) \leq f(x) \leq Cg(x)$ for all $x$ of interest. At some points we will give rough or heuristic descriptions of arguments, and in these we will use notation such as $\lesssim, \approx$. We do not try to assign a precise meaning to these symbols--- they mean that one quantity is smaller than another, or roughly the same size as another, up to terms that turn out to be unimportant in the rigorous implementation of the arguments.

Much of this paper will involve the discussion of probabilistic issues. We will write $\p$ to denote a probability measure, and $\E$ to denote expectation (i.e. averaging, or more formally integration) with respect to the measure $\p$.

\section{Basic principles}
One can build up a great deal of understanding of the zeta function beginning from the following idea, which we first state in a heuristic way.
\begin{principle}\label{bprin1}
As $t$ varies, the numbers $(p^{-it})_{p \; \text{prime}}$ ``behave like'' a sequence of independent random variables, each distributed uniformly on the complex unit circle.
\end{principle}

It is clear that for any given $p$, as $t$ varies over an interval the quantity $p^{-it} = e^{-it\log p}$ rotates around the complex unit circle at ``speed'' $\log p$, behaving like a uniform random variable. Thus the interesting assertion in Principle~\ref{bprin1} is that we should think of the~$p^{-it}$ as being independent. That is because the primes are multiplicatively independent, or equivalently the speeds $\log p$ are linearly independent over the rationals. Both of these statements are just ways of expressing the uniqueness of prime factorisation. So as each of the $p^{-it}$ rotate around, there are no fixed relations between any combinations of them and so, heuristically, they shouldn't ``see one another's behaviour'' too much (unlike if we considered $2^{-it}, 3^{-it}, 6^{-it}$, say, for which always $6^{-it} = 2^{-it} 3^{-it}$).

What rigorous statements can we make that would correspond to Principle~\ref{bprin1}? The following result, although easily proved, turns out to be a very powerful tool.
\begin{lemma}\label{bcorrlem}
Let $T \geq 1$, and let $p_{1}, \dots, p_{k}, p_{k+1}, \dots, p_{\ell}$ be any primes (not necessarily distinct). Let $(X_p)_{p \; \text{prime}}$ be a sequence of independent random variables, each distributed uniformly on the complex unit circle. Then
$$ \frac{1}{T} \int_{T}^{2T} \prod_{j=1}^{k} p_{j}^{-it} \overline{\biggl(\prod_{j=k+1}^{\ell} p_{j}^{-it} \biggr)} dt = \E \prod_{j=1}^{k} X_{p_{j}} \prod_{j=k+1}^{\ell} \overline{X_{p_{j}}} + O\biggl(\frac{\min\{\prod_{j=1}^{k} p_{j}, \prod_{j=k+1}^{\ell} p_{j}\}}{T} \biggr) . $$ 
\end{lemma}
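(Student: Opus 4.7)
The plan is to collapse the integrand into a single pure exponential, separate the ``diagonal'' case (when the product of primes in the numerator equals that in the denominator) from the ``off-diagonal'' case, and match each with the expectation on the right. Set $A := \prod_{j=1}^{k} p_{j}$ and $B := \prod_{j=k+1}^{\ell} p_{j}$. Since $\overline{p^{-it}} = p^{it}$ for $p > 0$ and real $t$, the integrand becomes $A^{-it} B^{it} = e^{it \log(B/A)}$.

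First I would dispatch the expectation. Grouping by distinct primes $p$, write $n_p$ (resp. $m_p$) for the multiplicity of $p$ in $(p_1, \dots, p_k)$ (resp. $(p_{k+1},\dots,p_\ell)$). By independence, $\E \prod_{j=1}^{k} X_{p_{j}} \prod_{j=k+1}^{\ell} \overline{X_{p_{j}}} = \prod_p \E[X_p^{n_p} \overline{X_p}^{m_p}]$; and since each $X_p = e^{i\theta}$ with $\theta$ uniform on $[0,2\pi)$, each factor equals $\delta_{n_p, m_p}$. By unique factorisation, the conditions ``$n_p = m_p$ for every $p$'' and ``$A = B$'' are equivalent, so the expectation equals $1$ if $A = B$ and $0$ otherwise.

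The verification now splits into two cases. If $A = B$ the integrand is identically $1$, the time-average equals $1$, and this matches the expectation with zero error. If $A \ne B$ the expectation vanishes and it suffices to show the left side is $O(\min(A,B)/T)$. Direct integration gives
\[ \frac{1}{T} \int_T^{2T} e^{it \log(B/A)} \, dt = \frac{e^{2iT\log(B/A)} - e^{iT \log(B/A)}}{i T \log(B/A)}, \]
whose modulus is at most $2/(T |\log(B/A)|)$.

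The only genuine technical step is the lower bound on $|\log(B/A)|$, and in particular obtaining $\min(A,B)$ rather than $\max(A,B)$ in the denominator: without loss of generality $A < B$, and since $A, B$ are positive integers, $B \geq A+1$, whence $B/A \geq 1 + 1/A$ and
\[ |\log(B/A)| \geq \log(1 + 1/A) \geq \frac{1}{A+1} \geq \frac{1}{2\min(A,B)}, \]
using the elementary inequality $\log(1+x) \geq x/(1+x)$ for $x \geq 0$. Plugging in yields the $O(\min(A,B)/T)$ bound as claimed. The conceptual punchline worth emphasising is that the ``diagonal'' condition $A = B$ --- that is, unique factorisation --- is simultaneously what makes the expectation nonzero and what makes the oscillatory integral of constant order; everything else is essentially a one-line computation.
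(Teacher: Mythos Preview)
Your proof is correct and follows essentially the same route as the paper: collapse to a single exponential, separate diagonal ($A=B$) from off-diagonal via unique factorisation, and bound the off-diagonal integral by $\ll 1/(T|\log(B/A)|)$ using the integer spacing $|A-B|\geq 1$. Your treatment of the lower bound on $|\log(B/A)|$ via $\log(1+x)\geq x/(1+x)$ is in fact slightly slicker than the paper's, which splits into the cases $B/A \notin [3/4,4/3]$ and $B/A \in [3/4,4/3]$ before invoking the Taylor expansion in the latter.
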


\begin{proof}[Proof of Lemma~\ref{bcorrlem}]
We can rewrite the integral on the left as
$$ \frac{1}{T} \int_{T}^{2T} \exp\Bigl\{-it\Bigl(\sum_{j=1}^{k} \log p_{j} - \sum_{j=k+1}^{\ell} \log p_{j}\Bigr) \Bigr\} dt . $$
So if $\sum_{j=1}^{k} \log p_{j} = \sum_{j=k+1}^{\ell} \log p_{j}$ then the
integral is exactly~$1$. And since this is equivalent (by uniqueness of prime
factorisation) to saying that the $(p_j)_{j=k+1}^{\ell}$ are just some
reordering, with the same multiplicities, of the $(p_j)_{j=1}^{k}$, we see
that,  in this case as well, 
$\E \prod_{j=1}^{k} X_{p_{j}} \prod_{j=k+1}^{\ell} \overline{X_{p_{j}}} = 1$ since every $X_p$ is paired with a conjugate copy.

If $\sum_{j=1}^{k} \log p_{j} \neq \sum_{j=k+1}^{\ell} \log p_{j}$, then on the right some $X_p$ is not paired with a conjugate copy, so by independence and symmetry of the distributions of the $X_p$ we have $\E \prod_{j=1}^{k} X_{p_{j}} \prod_{j=k+1}^{\ell} \overline{X_{p_{j}}} = 0$. The integral on the left may be calculated explicitly as
$$ \frac{1}{T} \left[ \frac{\exp\bigl\{-it(\sum_{j=1}^{k} \log p_{j} - \sum_{j=k+1}^{\ell} \log p_{j}) \bigr\}}{-i(\sum_{j=1}^{k} \log p_{j} - \sum_{j=k+1}^{\ell} \log p_{j})} \right]_{T}^{2T} \ll \frac{1}{T \Bigl|\log\Bigl(\frac{\prod_{j=1}^{k} p_{j}}{\prod_{j=k+1}^{\ell} p_{j}}\Bigr)\Bigr|} . $$
If $\prod_{j=1}^{k} p_{j} < (3/4)\prod_{j=k+1}^{\ell} p_{j}$ or if $\prod_{j=1}^{k} p_{j} > (4/3)\prod_{j=k+1}^{\ell} p_{j}$ then the logarithmic term here is $> \log 4/3$, so we get an acceptable error term $O(1/T)$. Otherwise, we can write $\log\Bigl(\frac{\prod_{j=1}^{k} p_{j}}{\prod_{j=k+1}^{\ell} p_{j}}\Bigr) = \log\Bigl(1 + \frac{\prod_{j=1}^{k} p_{j} - \prod_{j=k+1}^{\ell} p_{j}}{\prod_{j=k+1}^{\ell} p_{j}}\Bigr)$ and use the Taylor expansion of the logarithm. Since we know that $\prod_{j=1}^{k} p_{j} - \prod_{j=k+1}^{\ell} p_{j} \neq 0$, in fact it is $\geq 1$ and we get a lower bound $\gg \frac{1}{\prod_{j=k+1}^{\ell} p_{j}}$ from the Taylor expansion. Since we are in the case where $\prod_{j=1}^{k} p_{j}$ and $\prod_{j=k+1}^{\ell} p_{j}$ differ at most by a multiplicative factor 4/3, this can also be written as $\gg \frac{1}{\min\{\prod_{j=1}^{k} p_{j} , \prod_{j=k+1}^{\ell} p_{j}\}}$.
\end{proof}

Lemma~\ref{bcorrlem} implies that if we examine the $t$-average of some polynomial expression in the $p^{-it}$, this will be close to the corresponding average of the genuinely random $X_p$ provided that when we expand things out, the product of the primes involved is small compared with $T$. Since one can approximate quite general functions using polynomials (with the degree and coefficient size increasing as one looks for better approximations), one can hope to show rigorously that the distribution of sums of the $p^{-it}$ is often close to the distribution of sums of the $X_p$. A particular instance of this is the well known {\em method of moments} from probability theory. For example, if $P=P(T)$ is some large quantity, $(a_p)_{p \; \text{prime}} = (a_p(T))_{p \; \text{prime}}$ are complex numbers, and if one can show that for each $k \in \N$ one has
$$ \frac{1}{T} \int_{T}^{2T} \Bigl(\Re \sum_{p \leq P} a_{p} p^{-it}\Bigr)^{k} dt \rightarrow \E N(0,1)^k \;\;\; \text{as} \; T \rightarrow \infty , $$
then it follows that the distribution of $\Re \sum_{p \leq P} a_{p} p^{-it}$
converges to the standard Normal distribution as $T \rightarrow \infty$. (Here
we wrote $\E N(0,1)^k = (2\pi)^{-1/2} \int_{-\infty}^{\infty} {w^k e^{-w^{2}/2}} dw$ to denote the $k$-th power moment of the standard Normal distribution.) In view of the above discussion, if the size of the $a_p$ is under control then one could hope to prove such convergence (presuming it actually holds!) when $P(T) = T^{o(1)}$, so that the error terms in Lemma~\ref{bcorrlem} don't contribute too much.

\medskip
Our other basic principle is the following.
\begin{principle}\label{bprin2}
For many purposes (especially statistical questions not directly involving the zeta zeros), for any $\sigma \geq 1/2$ the Riemann zeta function $\zeta(\sigma+it)$ ``behaves like'' an Euler product $\prod_{\text{primes} \; p \leq P} (1 - \frac{1}{p^{\sigma+it}})^{-1}$ of ``suitable'' length $P=P(\sigma, t)$.
\end{principle}

As discussed in the Introduction, the reason for believing that something like Principle~\ref{bprin2} could prevail is that $\zeta(\sigma + it)$ is equal to an Euler product when $\sigma > 1$, and if the primes are well distributed then one expects this identity to continue to influence the behaviour of the zeta function for smaller $\sigma$. Indeed, the Riemann Hypothesis is the statement that it does continue to have an influence, at least to the extent that $\zeta(\sigma + it) \neq 0$ (like a finite product of non-vanishing terms) when $\sigma > 1/2$.

It is much harder to prove rigorous statements corresponding to Principle~\ref{bprin2} than it was for Principle~\ref{bprin1}, and we shall discuss several examples of such statements in the sequel. One also needs to think carefully about the appropriate sense of ``behaves like'' here, especially when $\sigma = 1/2$, since the Riemann zeta function does have infinitely many zeros on the critical line which don't reflect Euler product type behaviour. But to fix ideas a little we state one nice result, which we will also come back to later.

\begin{proposition}[Radziwi\l\l \, and Soundararajan, 2017]\label{radzsoundapprox}
For all $T \leq t \leq 2T$, except for a set whose measure is $o(T)$ as $T \rightarrow \infty$, we have
$$ \zeta\Bigl(1/2 + \frac{W}{\log T} + it\Bigr) = (1+o(1))\exp\Bigl\{\sum_{p^{k} \leq P} \frac{1}{k p^{k(1/2 + W/\log T + it)}} \Bigr\} , $$
where the sum is over prime powers $p^k$. Here $W= (\log\log\log T)^4$, and $P = T^{1/(\log\log\log T)^2}$, and the $o(1)$ term tends to $0$ as $T \rightarrow \infty$.
\end{proposition}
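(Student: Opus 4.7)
With $s := 1/2 + W/\log T + it$, the stated claim is equivalent to
$$\log \zeta(s) = \sum_{n \leq P} \frac{\Lambda(n)}{n^s \log n} + o(1)$$
off a subset of $[T, 2T]$ of measure $o(T)$, since $\sum_{p^k \leq P} 1/(k p^{ks}) = \sum_{n \leq P} \Lambda(n)/(n^s \log n)$ (the right-hand side being supported on prime powers). I would approach this via a Selberg-type identity expressing $\log\zeta(s)$ as a truncated Dirichlet polynomial plus an explicit error involving the zeros of $\zeta$: for any $X \geq 2$,
$$\log \zeta(s) = \sum_{n \leq X^2} \frac{\Lambda_X(n)}{n^s \log n} + \frac{1}{\log X}\sum_\rho \frac{X^{\rho-s} - X^{2(\rho-s)}}{(s-\rho)^2} + O\Bigl(\frac{1}{\log X}\Bigr),$$
where $\Lambda_X(n) = \Lambda(n)$ for $n \leq X$ and $\Lambda_X(n) = \Lambda(n)\log(X^2/n)/\log X$ for $X < n \leq X^2$.

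Setting $X := \sqrt P$, the main Dirichlet polynomial matches the target up to the boundary discrepancy supported on $X < n \leq P$. Since $P \ll T$, Lemma~\ref{bcorrlem} applies, and a direct diagonal computation shows the $L^2$ mean-square of this boundary term on $[T, 2T]$ is $O(T u^2 e^{-u^2}/\log T)$ with $u := \log\log\log T$, which is $o(T)$; Chebyshev's inequality then removes a set of measure $o(T)$ where the boundary contribution is not $o(1)$.

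The technical core of the proof is controlling the zero sum $\mathcal{Z}(s) := \frac{1}{\log X} \sum_\rho (X^{\rho-s} - X^{2(\rho-s)})/(s-\rho)^2$. For zeros on the critical line we have the crucial decay $|X^{\rho-s}| \leq X^{1/2-\sigma} = e^{-u^2/2}$, and the shift gives $|s-\rho| \geq W/\log T$; however, the zero density of $\asymp \log T$ per unit interval in $\gamma$ makes a pointwise bound just barely insufficient. The resolution is to estimate the $L^2$-mean $\int_T^{2T} |\mathcal{Z}(s)|^2 dt$: the diagonal contribution is handled via $\int 1/|s-\rho|^4 dt \ll (\log T/W)^3$ combined with the Riemann--von Mangoldt zero-counting formula, while the off-diagonal terms are controlled using second-moment bounds on zero spacings. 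The net bound $\int_T^{2T} |\mathcal{Z}|^2 dt = o(T)$ combined with Chebyshev again gives a small exceptional set. Any zeros with $\Re\rho > 1/2$ --- rare by Ingham's density estimate $\#\{\rho : \Re\rho > 1/2+\delta,\ |\gamma| \leq T\} \ll T^{A(1/2-\delta)}$ for some $A > 0$ --- contribute negligibly after removing a further set of measure $o(T)$.

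The main obstacle is the delicate calibration of $W$ against $P$: $W = (\log\log\log T)^4$ is chosen large enough that the decay $X^{\rho-s} = e^{-(\log\log\log T)^2/2}$ outpaces the zero density, and $\log P = \log T/(\log\log\log T)^2$ is small enough that the boundary truncation is an $o(T)$ $L^2$ error. With both errors controlled off an exceptional set, exponentiation of Selberg's identity recovers the stated asymptotic $\zeta(s) = (1+o(1))\exp\{\sum_{p^k \leq P} 1/(k p^{ks})\}$.
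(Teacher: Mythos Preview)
Your approach via an explicit formula is genuinely different from the paper's, which never introduces a sum over zeros. Radziwi\l\l\ and Soundararajan work multiplicatively: they build a mollifier $M(s)=\sum_n c(n)n^{-s}$ (a truncated Dirichlet-series inverse of $\zeta$), show by a direct second-moment computation on Dirichlet polynomials that $\int_T^{2T}|\zeta(s)M(s)-1|^2\,dt=o(T)$, and then separately verify that $M(s)=(1+o(1))\exp\{-\sum_{p^k\le P}1/(kp^{ks})\}$ for most $t$ (using that the prime sum is usually not too large). Zeros never enter.

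Your route has a real gap in the control of $\mathcal{Z}(s)$. Set $u=\log\log\log T$. Following your own diagonal bookkeeping: each critical-line zero contributes $\int_T^{2T}|s-\rho|^{-4}\,dt\asymp(\log T/W)^3$; summing over the $\asymp T\log T$ zeros with $\gamma\in[T,2T]$ and multiplying by the prefactor $|X^{\rho-s}|^2/(\log X)^2\asymp e^{-u^2}u^4/(\log T)^2$ gives a diagonal contribution
\[
\asymp\ T\,\frac{e^{-u^2}(\log T)^2}{u^{8}}.
\]
Since $u^2=(\log\log\log T)^2$ is tiny compared with $\log\log T$, the factor $e^{-u^2}(\log T)^2$ tends to infinity, so the diagonal alone is $\gg T$, not $o(T)$. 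Your proposed strategy of bounding diagonal and off-diagonal separately therefore cannot yield $\int|\mathcal Z|^2=o(T)$. There is, of course, massive cancellation in the full zero sum (the proposition is true, after all), but exploiting it amounts to already knowing that $\log\zeta(s)$ is close to the prime sum, which is circular; Selberg's original moment method does extract this cancellation, but via a considerably more delicate bootstrap than the termwise $L^2$ estimate you sketch. The decay $X^{1/2-\sigma}=e^{-u^2/2}$ coming from the shift $W/\log T$ is simply far too weak to beat the zero density on its own, and this is precisely why the mollifier method is the efficient route here.
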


The reader needn't be too concerned about the exact choices of $W$ and $P$ here, and in any event there is some flexibility in those (they are related though, as $W$ increases one can take $P$ smaller). Proposition~\ref{radzsoundapprox} says that $\zeta(s)$ behaves like an Euler product (or the exponential of a prime number sum) provided one shifts away from the critical line $\Re(s) = 1/2$ by a small amount $W/\log T$. As discussed earlier, such a statement cannot hold when $\Re(s) = 1/2$ because of the zeros of the zeta function. But knowing the result when $\Re(s)$ is slightly larger is sufficient for many purposes, since one can use derivative estimates (or more sophisticated statements of a similar character) to pass from knowing things just off the critical line to knowing things on the critical line.

We give a brief sketch of Radziwi\l\l \, and
Soundararajan's~\cite{radsoundclt} proof of
Proposition~\ref{radzsoundapprox}. Using a mean square calculation, one can
show that for most $T \leq t \leq 2T$ we have $\zeta(1/2 + \frac{W}{\log T} +
it) M(1/2 + \frac{W}{\log T} + it) = 1+o(1)$, 
where $M(s) = \sum_{n} \frac{c(n)}{n^s}$ and the coefficients~$c(n)$ are a truncated version of the coefficients one gets by formally expanding the product $\prod_{\text{primes} \; p} (1 - \frac{1}{p^{\sigma+it}})$. Note that one can compute such mean square averages fairly easily using e.g.\ classical approximations $\zeta(s) \approx \sum_{n \leq X} \frac{1}{n^s}$ for the zeta function, provided the coefficients $c(n)$ are zero when $n$ is large (larger than $T^{\epsilon}$, say). Proposition~\ref{radzsoundapprox} follows by combining this with the observation that, for most $T \leq t \leq 2T$, we have $M(1/2 + \frac{W}{\log T} + it) = (1+o(1)) \exp\{-\sum_{p^{k} \leq P} \frac{1}{k p^{k(1/2 + W/\log T + it)}} \}$ (note the minus sign here), which follows from the construction of $c(n)$, the series expansion of the exponential, and (importantly) the fact that $\sum_{p^{k} \leq P} \frac{1}{k p^{k(1/2 + W/\log T + it)}}$ isn't too large for most $T \leq t \leq 2T$.

\medskip
We conclude this section with some small computations that will recur a number of times in the sequel. Firstly we have a number theoretic calculation, which the reader might wish to compare with our earlier discussion of the method of moments.
\begin{lemma}\label{pvarlem}
For any $T \geq 1$ and any $1 \leq x \leq y$, we have
$$  \frac{1}{T} \int_{T}^{2T} \Re \sum_{x \leq p \leq y} \frac{1}{p^{1/2+it}} dt = O\Bigl(\frac{\sqrt{y}}{T}\Bigr) , $$
$$ \frac{1}{T} \int_{T}^{2T} \Bigl(\Re \sum_{x \leq p \leq y} \frac{1}{p^{1/2+it}}\Bigr)^2 dt = \frac{1}{2} \log\Bigl(\frac{\log y}{\log x}\Bigr) + O\Bigl(\frac{1}{\log^{100}(2x)} + \frac{y^2}{T}\Bigr) . $$
\end{lemma}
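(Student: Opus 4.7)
The plan is to reduce both estimates to Lemma~\ref{bcorrlem} after a short moment expansion, with a quantitative form of Mertens' theorem supplying the leading term in the variance.

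For the mean, I would exchange $\Re$, the sum over $p$, and the integral, and then apply Lemma~\ref{bcorrlem} to each prime $p$ separately with $k=\ell=1$ and empty conjugate product. Since $X_p$ is uniform on the unit circle and hence mean zero, this gives $\frac{1}{T}\int_T^{2T} p^{-it}\,dt = O(1/T)$ for every $p$. Weighting by $1/\sqrt p$ and using the trivial bound $\sum_{p\le y} 1/\sqrt p \le \sum_{n\le y} 1/\sqrt n \ll \sqrt y$ then yields the claimed error $O(\sqrt y/T)$.

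For the variance, set $z(t) := \sum_{x\le p\le y} p^{-1/2-it}$ and decompose $(\Re z)^2 = \frac12 |z|^2 + \frac12 \Re(z^2)$. Expanding $|z|^2$ produces a double sum over pairs $(p_1,p_2)$ of $\frac{1}{\sqrt{p_1 p_2}}\, p_1^{-it}\, \overline{p_2^{-it}}$; here Lemma~\ref{bcorrlem} with $k=1,\ell=2$ gives $\E X_{p_1}\overline{X_{p_2}} = \delta_{p_1,p_2}$, so the diagonal supplies the main term $\sum_{x\le p\le y} 1/p$. The attendant $O(\min(p_1,p_2)/T)$ errors aggregate, via $\min(p_1,p_2)/\sqrt{p_1 p_2}\le 1$, to at most $O(\pi(y)^2/T) = O(y^2/T)$. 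Expanding $z^2$ similarly, Lemma~\ref{bcorrlem} now yields $\E X_{p_1} X_{p_2} = 0$ in every case (no conjugate pairing is possible), so only errors remain, bounded by $\frac{1}{T}\bigl(\sum_{p\le y} 1/\sqrt p\bigr)^2 \ll y/T$, which is negligible compared with $y^2/T$. Combining,
$$ \frac{1}{T}\int_T^{2T} (\Re z(t))^2\, dt \;=\; \frac12 \sum_{x\le p\le y} \frac{1}{p} \;+\; O\Bigl(\frac{y^2}{T}\Bigr). $$

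The proof is then finished by a quantitative form of Mertens' theorem, $\sum_{x\le p\le y} 1/p = \log(\log y/\log x) + O(1/\log^{100}(2x))$, which follows by partial summation from the prime number theorem with the de la Vall\'ee Poussin error term. This last ingredient is the only mildly subtle point: the elementary Mertens argument yields only an $O(1/\log x)$ remainder, so the exponent $100$ in the stated bound requires inputting PNT with an effective error term. Everything else is routine bookkeeping on top of Lemma~\ref{bcorrlem}.
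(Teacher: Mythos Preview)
Your proof is correct and follows essentially the same approach as the paper: both arguments reduce everything to Lemma~\ref{bcorrlem}, extract the diagonal $\sum_{x\le p\le y}1/p$ from the cross terms (your decomposition $(\Re z)^2=\tfrac12|z|^2+\tfrac12\Re(z^2)$ is just a regrouping of the paper's four-term expansion of $\bigl(\tfrac12(z+\bar z)\bigr)^2$), and then appeal to the Prime Number Theorem for the main term. Your error accounting is in fact slightly more explicit than the paper's, and your remark that the exponent $100$ requires PNT with a quantitative error (not merely elementary Mertens) is a fair observation that the paper glosses over.
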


\begin{proof} 
We rewrite $\Re \sum_{x \leq p \leq y} \frac{1}{p^{1/2+it}} = \frac{1}{2}\bigl(\sum_{x \leq p \leq y} \frac{1}{p^{1/2+it}} + \overline{\sum_{x \leq p \leq y} \frac{1}{p^{1/2+it}}}\bigr)$. The first statement in Lemma~\ref{pvarlem} follows directly by combining this with Lemma~\ref{bcorrlem}. For the second statement, we can rewrite the left hand side as
$$ \frac{1}{4} \sum_{x \leq p,q \leq y} \frac{1}{\sqrt{pq}} \frac{1}{T} \int_{T}^{2T} \bigl(p^{-it}q^{-it} + p^{-it}\overline{q^{-it}} + \overline{p^{-it}}q^{-it} + \overline{p^{-it}}\overline{q^{-it}}\bigr) dt , $$
and using Lemma~\ref{bcorrlem} this is the same as
$$  \frac{1}{4} \sum_{x \leq p,q \leq y} \frac{1}{\sqrt{pq}} \biggl(\E X_{p} X_{q} + \E X_{p} \overline{X_{q}} + \E \overline{X_{p}} X_{q} + \E \overline{X_{p}} \overline{X_{q}} + O\Bigl(\frac{\min\{p,q\}}{T}\Bigr) \biggr) . $$
It is easy to check that if $p \neq q$ then by independence and symmetry all of these expectations vanish, whereas when $p=q$ we have $\E X_{p}^2 = 0$ and $\E |X_p|^2 = 1$. Lemma~\ref{pvarlem} finally follows using the standard estimate $\sum_{x \leq p \leq y} \frac{1}{p} = \log\bigl(\frac{\log y}{\log x}\bigr) + O\bigl(\frac{1}{\log^{100}(2x)}\bigr)$, say, which follows from the Prime Number Theorem.
\end{proof}

Lemma~\ref{pvarlem} tells us that the mean value of $\Re \sum_{p \leq \sqrt{T}} \frac{1}{p^{1/2+it}}$ (say) is very small for large $T$, and the mean square (which is essentially also the variance, since the mean is small) is $\sim (1/2)\log\log T$. We will see the quantity $\log\log T$ appear in many places later, and this variance calculation is one of the key sources of it. Let us emphasise that $\log\log T \rightarrow \infty$ as $T \rightarrow \infty$, whereas if one attempted a similar calculation with $\Re \sum_{p \leq \sqrt{T}} \frac{1}{p^{\sigma+it}}$ for any fixed $\sigma > 1/2$ then the mean square would be convergent. This is one of the key sources of difficulty and interest on the critical line, as compared with elsewhere in the complex plane. On the other hand, $\log\log T$ is a very slowly growing function, which turns out to be key to the success of many of the arguments that we can implement.

We also record a probabilistic calculation.
\begin{lemma}\label{maxnormlem}
Let $Z_1, \dots{}, Z_n$ be independent Gaussian random variables, each having mean zero and standard deviation $\sigma > 0$. Then for any $u \geq 0$, we have
$$ 1 - e^{-\Theta\bigl(n \frac{e^{-u^{2}/2}}{1+u}\bigr)} \leq \p\Bigl(\max_{1 \leq i \leq n} Z_i > u\sigma\Bigr) \ll n \frac{e^{-u^{2}/2}}{1+u} . $$
In particular, for any $\epsilon > 0$ we have
$$ \p\Bigl(1-\epsilon \leq \frac{\max_{1 \leq i \leq n} Z_i}{\sigma\sqrt{2\log n}} \leq 1\Bigr) \rightarrow 1 \;\;\;\;\; \text{as} \; n \rightarrow \infty . $$
\end{lemma}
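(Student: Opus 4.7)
My plan is to reduce everything to a sharp version of the classical Gaussian tail estimate and then exploit independence. Since the $Z_i$ are centred Gaussian with variance $\sigma^2$, the events $\{Z_i > u\sigma\}$ are independent copies of $\{Z > u\}$ where $Z$ has the standard Normal distribution. So the first step is to record the two-sided bound $\p(Z > u) \asymp \frac{e^{-u^2/2}}{1+u}$, valid for all $u \geq 0$. This follows from standard Mills-ratio arguments: for $u \geq 1$, integration by parts applied to $\int_{u}^{\infty} e^{-w^2/2} dw$ gives $\p(Z > u) = \frac{e^{-u^2/2}}{u\sqrt{2\pi}}(1+O(1/u^2))$, while for $0 \leq u \leq 1$ the probability and $e^{-u^2/2}/(1+u)$ are both bounded between two positive constants.

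For the upper bound on $\p(\max_{1 \leq i \leq n} Z_i > u\sigma)$, I would simply apply the union bound, obtaining $\p(\max_i Z_i > u\sigma) \leq n \p(Z > u) \ll n e^{-u^2/2}/(1+u)$, as required. For the matching lower bound, I would use independence directly: setting $q = \p(Z > u)$, we have $\p(\max_i Z_i \leq u\sigma) = (1-q)^n = e^{n \log(1-q)}$. Because $u \geq 0$ we have $q \leq 1/2$, and on the interval $0 \leq q \leq 1/2$ one has $-\log(1-q) \asymp q$. Hence $\p(\max_i Z_i > u\sigma) = 1 - e^{-\Theta(nq)} = 1 - e^{-\Theta(n e^{-u^2/2}/(1+u))}$, which is the claimed lower bound.

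For the ``in particular'' statement, I would apply the first part at the two thresholds $u = \sqrt{2\log n}$ and $u = (1-\epsilon)\sqrt{2\log n}$. At the first threshold, $n e^{-u^2/2}/(1+u) = 1/(1+\sqrt{2\log n}) \to 0$, so the upper bound yields $\p(\max_i Z_i > \sigma\sqrt{2\log n}) \ll 1/\sqrt{\log n} \to 0$. At the second threshold, $n e^{-u^2/2}/(1+u)$ equals $n^{2\epsilon - \epsilon^2}/(1+(1-\epsilon)\sqrt{2\log n}) \to \infty$, so the lower bound yields $\p(\max_i Z_i > (1-\epsilon)\sigma\sqrt{2\log n}) \to 1$. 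Intersecting the two high-probability events $\{\max_i Z_i \leq \sigma\sqrt{2\log n}\}$ and $\{\max_i Z_i > (1-\epsilon)\sigma\sqrt{2\log n}\}$ gives the stated convergence.

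There is no deep obstacle here; the only mild subtlety is making sure that the implicit constants in $-\log(1-q) \asymp q$ are uniform for all the $u \geq 0$ under consideration (which is exactly why one uses $u \geq 0 \Rightarrow q \leq 1/2$), and keeping track of the polynomial factor $(1+u)$ when checking that $n e^{-u^2/2}/(1+u)$ really does tend to $0$ respectively $\infty$ at the two thresholds used in the second statement. Everything else is a direct application of the union bound and of the independence of the $Z_i$.
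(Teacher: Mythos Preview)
Your proposal is correct and follows essentially the same approach as the paper: union bound for the upper estimate, independence plus the identity $\p(\max_i Z_i \leq u\sigma) = (1-q)^n$ for the lower estimate, and then evaluation at the two thresholds $u=\sqrt{2\log n}$ and $u=(1-\epsilon)\sqrt{2\log n}$ for the ``in particular'' statement. If anything you are slightly more explicit than the paper in justifying the Mills-ratio bound and the uniformity of $-\log(1-q)\asymp q$ via $q\leq 1/2$, but the underlying argument is the same.
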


\begin{proof}[Proof of Lemma~\ref{maxnormlem}]
Using the union bound, we have $\p(\max_{1 \leq i \leq n} Z_i > u\sigma) \leq \sum_{i=1}^{n} \p(Z_i > u\sigma)$. And $\p(Z_i > u\sigma)$ is just the probability that a $N(0,1)$ random variable is $> u$, which is $\asymp \frac{e^{-u^{2}/2}}{1+u}$. This proves the first upper bound.

To prove the lower bound, it will suffice to show that $\p(\max_{1 \leq i \leq n} Z_i \leq u\sigma) \leq e^{-\Theta\bigl(n \frac{e^{-u^{2}/2}}{1+u}\bigr)}$. But by independence, this probability is equal to $\prod_{i=1}^{n} \p(Z_i \leq u\sigma)$. And again, $\p(Z_i \leq u\sigma)$ is $1 - \Theta\bigl(\frac{e^{-u^{2}/2}}{1+u}\bigr) = \exp\bigl\{-\Theta\bigl(\frac{e^{-u^{2}/2}}{1+u}\bigr)\bigr\}$, which gives the result.

For the second statement, we just note that if we take $u = (1-\epsilon)\sqrt{2\log n}$, where $\epsilon > 0$ is small and $n$ is large, then $n \frac{e^{-u^{2}/2}}{1+u} \rightarrow \infty$ as $n \rightarrow \infty$ and so $\p(\max_{1 \leq i \leq n} Z_i > u\sigma) \geq 1 - o(1)$. Similarly, if we take $u = \sqrt{2\log n}$ then $n \frac{e^{-u^{2}/2}}{1+u} \rightarrow 0$ as $n \rightarrow \infty$, and so $\p(\max_{1 \leq i \leq n} Z_i > u\sigma) = o(1)$.
\end{proof}

Note that we made little use of the assumption that the $Z_i$ were Gaussian/Normal random variables. This just gave us a rather explicit form for the tail probabilities $\p(Z_i > u\sigma)$ that arose in the argument. It would also be easy to replace the second statement with something more precise, and later we shall extensively discuss the precise asymptotics of the maxima of Gaussian random variables.

\section{General landscape of the values of zeta}
To set the scene for our discussion of $\zeta(1/2+it)$ in short intervals of $t$, we now review some of the key information we have (both unconditional, conditional and conjectural) when $t$ varies over a wide range.

\medskip
Firstly one might ask about the ``typical'' size of $\zeta(1/2+it)$. A natural way to make this precise is to ask about the distribution of $|\zeta(1/2+it)|$, where $T \leq t \leq 2T$ (say) is chosen uniformly at random. This situation is described by a beautiful classical result of Selberg.
\begin{theorem}[Selberg Central Limit Theorem, 1946]\label{selbergclt}
For any $z \in \R$, we have
$$ \frac{1}{T} \meas\Bigl\{T \leq t \leq 2T : \frac{\log|\zeta(1/2+it)|}{\sqrt{(1/2)\log\log T}} \leq z \Bigr\} \rightarrow \Phi(z) \;\;\; \text{as} \; T \rightarrow \infty , $$
where $\meas\{\cdot\}$ denotes Lebesgue measure, and where $\Phi(z) := \int_{-\infty}^{z} \frac{e^{-w^{2}/2}}{\sqrt{2\pi}} dw$ is the standard Normal cumulative distribution function.
\end{theorem}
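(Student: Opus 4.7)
The plan is to follow the approach of Radziwi\l\l\ and Soundararajan, combining Proposition~\ref{radzsoundapprox} with the method of moments sketched after Lemma~\ref{bcorrlem}. By Proposition~\ref{radzsoundapprox}, for all but $o(T)$ values of $t \in [T,2T]$ we have
$$ \log|\zeta(1/2 + W/\log T + it)| = \Re \sum_{p^k \leq P} \frac{1}{k p^{k(1/2 + W/\log T + it)}} + o(1) . $$
The contribution from prime powers with $k \geq 2$ has bounded mean square (by Lemma~\ref{bcorrlem}, since $\sum_{k \geq 2} \sum_p 1/p^k$ converges), so it is $o(\sqrt{\log\log T})$ outside a further exceptional set of measure $o(T)$. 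Thus it suffices to prove a CLT, with variance $(1/2)\log\log T$, for
$$ S(t) := \Re \sum_{p \leq P} \frac{1}{p^{1/2 + W/\log T + it}} . $$

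For this I would apply the method of moments. Expanding $S(t)^k$ as a finite sum of terms $c \prod_j p_j^{\pm it}$ in which each prime product has size at most $P^k = T^{o(1)}$, Lemma~\ref{bcorrlem} matches the $t$-average of each such term with the expectation of the corresponding product of independent uniform unit-circle variables $X_p$, up to a total error $o(1)$. So the $k$-th moment of $S(t)$ over $[T,2T]$ is asymptotic to the $k$-th moment of the truly random sum $\Re \sum_{p \leq P} X_p/\sqrt{p}$. The latter has variance $(1+o(1))(1/2)\log\log T$ by Lemma~\ref{pvarlem}, and since its summands $X_p/\sqrt{p}$ are individually small, the classical Lindeberg CLT together with standard moment bounds yield convergence of its normalised moments to those of $N(0,1)$. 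The shift $W/\log T$ is harmless, since $p^{-W/\log T} = 1 + o(1)$ uniformly for $p \leq P$.

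The main obstacle is then to transfer this CLT from the slightly off-line height $1/2 + W/\log T + it$ to the critical line itself. On the line $\zeta$ has zeros, so $\log|\zeta(1/2+it)|$ has logarithmic singularities which Proposition~\ref{radzsoundapprox} does not directly control. To handle this, I would estimate the typical size of
$$ \Delta(t) := \log|\zeta(1/2+it)| - \log|\zeta(1/2 + W/\log T + it)| $$
via a Littlewood/Jensen contour integral, which expresses $\Delta(t)$ in terms of the zeros of $\zeta$ inside a disk of radius $\asymp W/\log T$ about $1/2 + it$. By the Riemann--von Mangoldt formula the expected number of such zeros is $O(W)$, and a mean-square count shows that for all but $o(T)$ values of $t$ this contribution is $(\log\log\log T)^{O(1)} = o(\sqrt{\log\log T})$. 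This is the step where genuine analytic input about $\zeta$ beyond its Dirichlet/Euler representations is required, and it is the main difficulty in the proof; the preceding steps, although they look elaborate, are close to being ``automatic'' consequences of our two basic principles.
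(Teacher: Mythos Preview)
Your proposal is correct and follows the same approach the paper sketches: Proposition~\ref{radzsoundapprox} supplies the Euler-product approximation just off the critical line, the method of moments handles the resulting prime sums, and a separate argument transfers from $\sigma_0 = 1/2 + W/\log T$ back to $\sigma = 1/2$. The paper only alludes to this last step (``derivative estimates or more sophisticated statements of a similar character''), while you spell out one valid route via zero-counting in a small disk, which is in the spirit of the Radziwi\l\l--Soundararajan argument.
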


Let us remark that although we will have $\zeta(1/2+it) = 0$ (and therefore $\log|\zeta(1/2+it)|$ will be undefined) for some points $T \leq t \leq 2T$ (in fact for $\asymp T\log T$ points), since these points form a discrete set they contribute nothing from the point of view of measure, so are irrelevant to the statement of Theorem~\ref{selbergclt}.

The Selberg Central Limit Theorem is the prototypical manifestation of the Basic Principles discussed in the previous subsection. Looking at things heuristically, we have
$$ \log|\zeta(1/2+it)| = \Re\log\zeta(1/2+it) \approx - \Re \sum_{p \leq P} \log\Bigl(1 - \frac{1}{p^{1/2+it}}\Bigr) \approx \Re \sum_{p \leq P} \frac{1}{p^{1/2+it}} , $$
for ``suitable'' $P = P(T)$. Then as $T \leq t \leq 2T$ varies, the terms $p^{-it}$ behave like independent random variables, and so $\log|\zeta(1/2+it)|$ behaves roughly like a sum of many independent random variables. This is exactly the situation where one expects to have convergence in distribution to a Normal random variable. The second part of the heuristic is rather easy to make rigorous to an acceptable level of precision in this setting, by computing moments of the sums $\Re \sum_{p \leq P} \frac{1}{p^{1/2+it}}$ and showing that they converge to the moments of a Normal distribution. The approximation $\log|\zeta(1/2+it)| \approx \Re \sum_{p \leq P} \frac{1}{p^{1/2+it}}$ has traditionally been more difficult to establish rigorously. As we already discussed, nothing like this can hold pointwise on the critical line because the left hand side will be undefined at some points $t$, so one wants to show that $\log|\zeta(1/2+it)| \approx \Re \sum_{p \leq P} \frac{1}{p^{1/2+it}}$ in some kind of average sense. The classical proofs of this entailed quite complicated manipulations to work around the zeros of zeta, but recently Radziwi\l\l \, and Soundararajan~\cite{radsoundclt} have given a very neat and conceptual proof using Proposition~\ref{radzsoundapprox}.

\medskip
Another key question is about the largest values attained by $|\zeta(1/2+it)|$ as $t$ varies. Unconditionally, our best upper bounds for the size of $\zeta(1/2+it)$ are rather weak despite the application of some very powerful methods to the problem.
\begin{theorem}[Bourgain, 2017]
For any $\epsilon > 0$ and all large $t$, we have the upper bound $|\zeta(1/2+it)| \ll_{\epsilon} t^{13/84 + \epsilon}$ (where the implicit constant may depend on $\epsilon$).
\end{theorem}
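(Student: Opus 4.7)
The plan is to reduce the problem to estimating exponential sums of the shape $S(t,N) = \sum_{N < n \le 2N} n^{-it}$ for dyadic ranges $N$ up to $\sqrt{t/(2\pi)}$, and then to attack those sums using the exponent pair / van der Corput machinery powered by Bourgain's $\ell^{2}$-decoupling theorem for the moment curve. The first step is to invoke the approximate functional equation, which gives
$$\zeta(1/2+it) = \sum_{n \le \sqrt{t/(2\pi)}} \frac{1}{n^{1/2+it}} + \chi(1/2+it) \sum_{n \le \sqrt{t/(2\pi)}} \frac{1}{n^{1/2-it}} + O(t^{-1/4}),$$
where $\chi$ is the factor from the functional equation (of modulus $1$). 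A dyadic decomposition and partial summation then bound $|\zeta(1/2+it)|$ by $t^{\epsilon} \max_{N \le \sqrt{t}} N^{-1/2} |S(t,N)|$.

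The second step is to bound $S(t,N)$. On a dyadic piece one writes $n^{-it} = e^{-it\log n}$ with phase $f(x) = -t\log x$, whose $k$-th derivative on $[N,2N]$ has size $\asymp t/N^{k}$. The classical $A/B$ (van der Corput) processes transform $S(t,N)$ into a family of exponential sums whose phases can be approximated on sub-intervals by polynomials of degree $k$. This reduces matters to bounding Weyl sums $\sum_{n=1}^{M} e(P(n))$ for polynomials $P$ of degree $k$ with coefficients in prescribed major-arc-free regions.

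The third and crucial step is where Bourgain's new input enters: the $\ell^{2}$-decoupling inequality for the moment curve $\{(u,u^{2},\ldots,u^{k}) : u\in[0,1]\}$ proven by Bourgain--Demeter--Guth yields sharp Vinogradov-type mean value estimates,
$$\int_{[0,1]^{k}} \Bigl|\sum_{n=1}^{M} e(\alpha_{1}n + \alpha_{2}n^{2} + \cdots + \alpha_{k}n^{k})\Bigr|^{2s} d\alpha \;\ll_{\epsilon}\; M^{s+\epsilon} + M^{2s - k(k+1)/2 + \epsilon}.$$
Combined with standard large-sieve / Halász-type deductions, this translates into pointwise bounds for Weyl sums that are essentially optimal in the relevant range, and it is precisely the improvement over the pre-decoupling Vinogradov bound that gives room beyond the Weyl exponent $1/6$.

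The final step is an optimization argument: one assembles the resulting pointwise Weyl estimates with the van der Corput processes into a new exponent pair, balances the contributions of different ranges $N$ in the dyadic decomposition, and tunes the degree $k$ and the number of $A$ vs. $B$ applications so as to extract the sharpest exponent the method can produce. The arithmetic that this optimization produces is $13/84$. The main obstacle, and the part that is genuinely delicate rather than routine, is precisely this optimization: the decoupling estimates interact non-trivially with the differencing steps and with the boundary cases where $N$ is close to $\sqrt{t}$ (where trivially $N^{-1/2}\cdot N = N^{1/2} \asymp t^{1/4}$ is already much worse than the target), so one has to reserve power gains for the middle range, and it is the careful bookkeeping of these gains against the losses in differencing that produces the particular numerical exponent $13/84$.
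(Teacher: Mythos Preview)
The paper does not actually prove this theorem; it simply quotes the result and summarises in one sentence that Bourgain combined the Hardy--Littlewood approximation, \emph{the exponential sum methods of Bombieri--Iwaniec}, and decoupling. So there is no detailed argument to match, but your sketch diverges from that one-sentence summary in a way that matters.

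Your outline replaces the Bombieri--Iwaniec machinery by the classical van der Corput $A/B$ processes together with the Bourgain--Demeter--Guth Vinogradov mean value theorem. That is a coherent programme and it does produce new exponent pairs (this is essentially Heath-Brown's 2017 paper), but it does \emph{not} yield $13/84$. The best exponents one obtains on the critical line from exponent pairs, even with the sharp Vinogradov input, remain weaker than what the Bombieri--Iwaniec method already gave before decoupling (Huxley's $32/205$), let alone Bourgain's improvement. The Bombieri--Iwaniec method is structurally different from van der Corput differencing: one cuts the sum into very short pieces on which the phase is approximated by a cubic, applies Poisson summation, and is left with a ``first spacing problem'' and a ``second spacing problem'' concerning the distribution of the resulting coefficient vectors. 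Bourgain's contribution is a decoupling inequality tailored to the first spacing problem (for a specific curve arising there, not the generic moment curve fed through Vinogradov), which sharpens Huxley's treatment and produces the exponent $13/84$ after the standard Bombieri--Iwaniec optimisation.

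So the gap is concrete: the step ``assemble the Weyl estimates with the van der Corput processes into a new exponent pair \ldots\ the arithmetic produces $13/84$'' is not correct as written. To reach $13/84$ you must pass through the Bombieri--Iwaniec framework and apply decoupling inside it, not alongside the $A/B$ processes.
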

Bourgain~\cite{bourgainzeta} proved this result by combining the Hardy--Littlewood approximation $\zeta(1/2+it) \approx \sum_{n \leq t} \frac{1}{n^{1/2+it}}$, exponential sum methods of Bombieri--Iwaniec, and progress in the theory of ``decoupling'' from harmonic analysis. For comparison, general complex analysis arguments (``convexity'') can prove a bound $\ll_{\epsilon} t^{1/4 + \epsilon}$, and long ago Hardy and Littlewood proved the bound $\ll_{\epsilon} t^{1/6 + \epsilon}$. Bourgain's exponent $13/84 \approx 0.155$ is the latest in a long line of improvements. Meanwhile the classical {\em Lindel\"of Hypothesis} (the truth of which follows from the Riemann Hypothesis) conjectures that $|\zeta(1/2+it)| \ll_{\epsilon} t^{\epsilon}$ for any $\epsilon > 0$ and all large $t$.

The bound $t^{\epsilon}$ proposed by the Lindel\"of Hypothesis is still rather soft, so what upper bound should we really expect, in other words what is the true size of $\max_{T \leq t \leq 2T} |\zeta(1/2+it)|$? There isn't a universal consensus about this, but the following results set some limits on where the truth can lie.
\begin{theorem}[Littlewood, 1924]\label{littlewoodupper}
If the Riemann Hypothesis is true, then for all large $t$ we have
$$ |\zeta(1/2+it)| \leq \exp\left\{C\frac{\log t}{\log\log t}\right\} , $$
for a certain absolute constant $C > 0$. 
\end{theorem}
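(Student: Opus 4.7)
The plan is to exploit the fact that, under RH, $\zeta(s)$ is zero-free in the open half-plane $\Re s > 1/2$, so $\log\zeta(s)$ is holomorphic there, and to bound its real part using a complex-analytic reduction combined with estimates on the density of zeros.

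First I would establish a version of the bound slightly off the critical line: for $\delta := 1/\log\log t$, the goal is $\log|\zeta(1/2+\delta+it)| \ll \log t/\log\log t$. The starting point is the Hadamard-type formula
\[
\frac{\zeta'}{\zeta}(s) = \sum_\rho \frac{1}{s-\rho} - \tfrac{1}{2}\tfrac{\Gamma'}{\Gamma}(s/2+1) + O(1),
\]
integrated in $\sigma$ from $2$ (where the Euler product gives $|\log\zeta(2+it)| = O(1)$) down to $1/2+\delta$. Taking real parts, the Gamma term contributes $+\tfrac{3}{4}\log t$ to the resulting upper bound for $\log|\zeta|$, which under RH is offset by the integrated zero sum: using the Riemann--von Mangoldt zero density $\sim \tfrac{1}{2\pi}\log t$, its mean value is also $\sim \tfrac{3}{4}\log t$, and controlling the fluctuation around this mean on scales down to $\delta$ yields the desired $\log t/\log\log t$ bound.

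Second, I would pass from $\sigma = 1/2+\delta$ to $\sigma = 1/2$ using the crucial sign property that, under RH, $\Re(\zeta'/\zeta)(1/2+u+it) \geq -\tfrac{1}{2}\log t + O(1)$ for every $u > 0$: the zero sum becomes $\sum_\gamma u/(u^2+(t-\gamma)^2)$, which is manifestly non-negative. Since
\[
\log|\zeta(1/2+it)| = \log|\zeta(1/2+\delta+it)| - \int_0^\delta \Re(\zeta'/\zeta)(1/2+u+it)\,du,
\]
the previous bound combined with this pointwise lower bound on the integrand gives $\log|\zeta(1/2+it)| \leq \log|\zeta(1/2+\delta+it)| + \tfrac{1}{2}\delta\log t + O(1)$, which is $O(\log t/\log\log t)$ by our choice of $\delta$.

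The main obstacle is the cancellation in the first step: both the Gamma contribution and the average of the zero sum are of size $\log t$, so a naive estimate of the fluctuation (e.g.\ using only the unconditional bound $N(t+1)-N(t-1) \ll \log t$) recovers only the convexity-strength bound $\log|\zeta| \ll \log t$. Obtaining the full saving requires a finer accounting of zeros in windows of width $\delta$, for instance via Selberg-type second-moment methods for sums over zeros, or an RH-based bound on the discrepancy $N(t) - \tfrac{t}{2\pi}\log\tfrac{t}{2\pi e}$ on short scales. The choice $\delta = 1/\log\log t$ is precisely the one that balances this fluctuation cost (worse for smaller $\delta$) against the cost $\tfrac{1}{2}\delta\log t$ of the passage to the critical line in the second step (worse for larger $\delta$).
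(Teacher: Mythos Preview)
Your Step~2 --- exploiting the nonnegativity of $\sum_\gamma u/(u^2+(t-\gamma)^2)$ under RH to pass from $\sigma = 1/2+\delta$ to $\sigma = 1/2$ --- is correct and standard, and $\delta = 1/\log\log t$ is the right scale.

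Step~1, however, has a genuine gap. You correctly identify the obstacle as the fluctuation of the integrated zero sum about its mean, but both proposed fixes are problematic. Selberg-type second-moment methods control such sums only on average in $t$, not at an individual point. The RH discrepancy bound $S(t) \ll \log t/\log\log t$ is itself a Littlewood-type theorem of the same depth as the target (it is the imaginary-part companion of what you are proving), so invoking it is essentially circular. And even granting it, a partial-summation estimate of the fluctuation against the kernel $\tfrac{1}{2}\log\bigl(((3/2)^2+x^2)/(\delta^2+x^2)\bigr)$ picks up that kernel's total variation, which is $\asymp \log(1/\delta) = \log\log\log t$; you would reach only $O\bigl((\log t/\log\log t)\log\log\log t\bigr)$, not $O(\log t/\log\log t)$.

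The paper takes a different route that sidesteps this entirely. It uses a Soundararajan-type pointwise inequality (valid under RH for every large $t$ and every parameter $x \leq t$)
\[
\log|\zeta(1/2+it)| \lesssim \Re\sum_{p \leq x}\frac{1}{p^{1/2+it}} + O\Bigl(\frac{\log t}{\log x}\Bigr),
\]
in whose derivation the zero contributions from the Hadamard formula again appear, but with a favourable sign so that they can simply be \emph{discarded} --- no fluctuation estimate is ever needed. One then bounds the prime sum trivially by $\sum_{p\leq x} p^{-1/2} \sim 2\sqrt{x}/\log x$ and chooses $x \asymp \log^{2} t$ to balance the two terms. In short: on the prime side the trivial bound at the optimal truncation already suffices; on the zero side one would need distributional control of the zeros that is equivalent in strength to the theorem itself.
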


\begin{theorem}[Bondarenko and Seip, 2018]\label{bondseiplower}
For all large $T$, we have
$$ \max_{1 \leq t \leq T} |\zeta(1/2+it)| \geq \exp\left\{(1 + o(1))\sqrt{\frac{\log T \log\log\log T}{\log\log T}}\right\} , $$
where the $o(1)$ term tends to $0$ as $T \rightarrow \infty$.
\end{theorem}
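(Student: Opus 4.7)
The plan is to apply the resonance method of Soundararajan, refined with the long-resonator and GCD-sum techniques of Bondarenko and Seip. One starts from the trivial pointwise inequality: for any Dirichlet polynomial $R(t) = \sum_{n \leq N} r(n) n^{-it}$ (the ``resonator''),
$$ \max_{T \leq t \leq 2T} |\zeta(1/2+it)| \geq \frac{\bigl|\int_{T}^{2T} |R(t)|^2 \zeta(1/2+it) dt\bigr|}{\int_{T}^{2T} |R(t)|^2 dt}, $$
so the problem reduces to choosing the coefficients $r(n)$ and the length $N = N(T)$ to maximise the right-hand side. Lemma~\ref{bcorrlem} applied termwise to the denominator yields $\int_{T}^{2T} |R(t)|^2 dt = (1+o(1)) T \sum_{n \leq N} |r(n)|^2$, provided $N$ is sufficiently smaller than $T$ that the off-diagonal errors do not overwhelm the diagonal.

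For the numerator, one inserts an approximate functional equation $\zeta(1/2+it) \approx \sum_{k \leq \sqrt{T/2\pi}} k^{-1/2-it}$, expands $|R(t)|^2 = \sum_{m,n} r(m) \overline{r(n)} (n/m)^{it}$, and integrates term by term. The dominant contribution comes from the ``diagonal'' triples $(m,n,k)$ with $n = mk$, giving
$$ \int_{T}^{2T} |R(t)|^2 \zeta(1/2+it) dt \approx T \sum_{\substack{m \mid n \\ n/m \leq \sqrt{T/2\pi}}} \frac{r(m) \overline{r(n)}}{\sqrt{n/m}}, $$
with the off-diagonal contributions controlled via the error term in Lemma~\ref{bcorrlem} and standard mean-value theorems for Dirichlet polynomials. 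A crucial point is to push the usable length $N$ as close to $T$ as possible (essentially $N = T^{1-o(1)}$), which is where the Bondarenko--Seip analysis goes beyond earlier, shorter-resonator arguments.

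The heart of the matter is then to choose $r(n)$ to maximise the ratio
$$ \frac{\sum_{m \mid n} r(m) \overline{r(n)} / \sqrt{n/m}}{\sum_{n} |r(n)|^2}. $$
Following Bondarenko--Seip, one takes $r(n)$ of multiplicative form supported on squarefree integers whose prime factors lie in a carefully chosen range $\mathcal{P} \subset [(\log T)^{\alpha_1}, (\log T)^{\alpha_2}]$, with $r(n) = \prod_{p \mid n} f(p)$ for a weight $f(p)$ to be optimised. With this choice both sums are essentially multiplicative, and after smoothing the hard cutoff $n \leq N$ the logarithm of the ratio becomes
$$ \sum_{p \in \mathcal{P}} \log\!\left(1 + \frac{2 f(p)/\sqrt{p}}{1 + f(p)^2}\right), $$
subject to the ``budget'' constraint $\sum_{p \in \mathcal{P}} f(p)^2 \log p \lesssim \log N$ imposed by the support condition. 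A Lagrange-multiplier optimisation then selects $f(p)$ of size roughly $\sqrt{\log\log\log T / \log\log T}$ on primes $p$ in the window $\mathcal{P}$, and a direct calculation shows that the resulting ratio is at least $\exp\bigl((1+o(1))\sqrt{\log T \log\log\log T / \log\log T}\bigr)$.

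The main obstacle is precisely the interaction between the size constraint $n \leq N$ and the multiplicative structure of $r(n)$: a naive truncation breaks multiplicativity, while a smooth truncation introduces a cross term that must be bounded by a GCD-type sum
$$ \sum_{m,n} \frac{r(m) \overline{r(n)} \gcd(m,n)}{\sqrt{mn}}. $$
Bondarenko and Seip dispose of this obstruction by invoking the sharp G\'al-type inequalities of Aistleitner--Berkes--Seip, which bound such GCD sums on squarefree integers by $\exp(\Theta(\sqrt{\log N \log\log\log N / \log\log N})) \sum_n |r(n)|^2$. Threading this estimate through the multiplicative optimisation is what yields the extra factor of $\sqrt{\log\log\log T}$ in the exponent compared with Soundararajan's earlier bound $\exp(c\sqrt{\log T / \log\log T})$.
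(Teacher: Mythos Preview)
Your outline captures the resonance-method skeleton correctly, but it misidentifies the key innovation, and as written the argument would not produce the $\sqrt{\log\log\log T}$ gain. You say the crucial point is to push the resonator length to $N = T^{1-o(1)}$; but with the support of $r$ constrained to $[1,N]$ for any $N \leq T$, the optimisation you describe only recovers Soundararajan's bound $\exp\bigl((1+o(1))\sqrt{\log T/\log\log T}\bigr)$. The paper's account stresses the opposite point: Bondarenko and Seip's main new idea is precisely to \emph{drop} the constraint that the support of $r$ lie in $[1,T]$. They allow $r(m) \neq 0$ for $m$ far larger than $T$, but supported on a set $\mathcal{M}$ so sparse that the off-diagonal errors from Lemma~\ref{bcorrlem} stay under control despite the huge individual terms. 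It is this freedom, not a tighter choice of $N$ below $T$, that opens up the larger prime window (roughly $P e < p < P e^{(\log\log T)^{c}}$ with $P \asymp \log T \log\log T$ and $c<1$, with a weight decaying like $1/\log(p/P)$) and ultimately yields the extra $\log\log\log T$ in the exponent.

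Relatedly, your description of the role of GCD sums is inverted. You present the G\'al-type inequality as an \emph{upper} bound used to dispose of a truncation cross-term. In Bondarenko--Seip the GCD sum is not an error to be controlled but essentially the \emph{main term}: the ratio of the two integrals is itself (after their manipulations) a GCD-type sum over $\mathcal{M}$, and the work is to \emph{construct} $\mathcal{M}$ and $r$ so that this sum is as large as the extremal examples in the G\'al/Aistleitner--Berkes--Seip theory. The ``obstacle'' you describe, namely truncation at $N$ destroying multiplicativity, is circumvented rather than estimated, precisely because the long sparse resonator removes the hard cutoff altogether.
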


Apart from a sequence of improvements to the value of $C$,
Littlewood's~\cite{littlewood} result in Theorem~\ref{littlewoodupper} hasn't
been improved for almost a century. Theorem~\ref{bondseiplower} is a recent
breakthrough of Bondarenko and Seip~\cite{bondseip, bondseip2}, improving on
earlier lower bounds of a similar shape but without the $\log\log\log T$
factor inside the square root. By further elaboration of their method, the constant $1+o(1)$ has even more recently been improved to $\sqrt{2}+o(1)$ by La Bret\`eche and Tenenbaum~\cite{dlBtenGalSums}.

\medskip
To appreciate these bounds, and contemplate where the truth might lie between them, it is instructive to consider a rough outline of the proofs.

Assuming the truth of the Riemann Hypothesis, one can prove upper bounds of roughly the following shape: for any large $t$ and any parameter $x \leq t$, we have
\begin{equation}\label{soundupper}
\log|\zeta(1/2+it)| \lesssim \Re \sum_{p \leq x} \frac{1}{p^{1/2+it}} + O\left(\frac{\log t}{\log x}\right) .
\end{equation}
See, for example, the Main Proposition of Soundararajan~\cite{soundmoments}. Note that this is another very nice manifestation of Principle~\ref{bprin2}: if we are only interested in upper bounds, we can control $\log|\zeta(1/2+it)|$ by sums over primes at {\em every} point $t$, even on the critical line. As noted previously, one cannot hope for a similar {\em lower} bound at every point, since when $\zeta(1/2+it) = 0$ the left hand side will be undefined (equal to $-\infty$, informally).

It is difficult to give a pointwise bound for this sum over primes except in a trivial way (especially when $x$ is small), namely $\Re \sum_{p \leq x} \frac{1}{p^{1/2+it}} \leq \sum_{p \leq x} \frac{1}{\sqrt{p}} \sim \frac{2x^{1/2}}{\log x}$. So to obtain the best possible upper bound for $\log|\zeta(1/2+it)|$, we choose $x$ to balance the size of this term and the ``big Oh'' term. Choosing $x \asymp \log^{2}t$ is optimal, and yields the claimed bound $\log|\zeta(1/2+it)| \ll \frac{\log t}{\log\log t}$ assuming the Riemann Hypothesis.

To prove their lower bound, Bondarenko and Seip~\cite{bondseip} work to compare the sizes (roughly speaking) of $\int_{1}^{T} \zeta(1/2+it) |R(t)|^2 dt$ and $\int_{1}^{T} |R(t)|^2 dt$, where $R(t)$ is an auxiliary ``resonator'' function that is chosen to concentrate its mass at points where $\zeta(1/2+it)$ should be large. For any choice of $R(t)$, upper bounding $|\zeta(1/2+it)|$ by $\max_{1 \leq t \leq T} |\zeta(1/2+it)|$ implies that
$$ \max_{1 \leq t \leq T} |\zeta(1/2+it)| \geq \frac{|\int_{1}^{T} \zeta(1/2+it) |R(t)|^2 dt|}{\int_{1}^{T} |R(t)|^2 dt} , $$
and if $R(t)$ is well chosen one can hope for this lower bound to be fairly efficient.

One of Bondarenko and Seip's main innovations, as compared with previous arguments, is to choose $R(t) = \sum_{m \in \mathcal{M}} r(m) m^{-it}$ for certain intricately constructed coefficients $r(m)$ whose support is {\em not} constrained to the interval $[1,T]$ (as would be usual to allow one to control error terms when evaluating the integrals). Instead they allow $r(m) \neq 0$ even when $m$ is extremely large, although only on a very sparse sequence of $m$ so that the error terms remain under control. Very roughly speaking, Bondarenko and Seip's resonator $R(t)$ concentrates its mass on those $t$ for which $\sum_{Pe < p < Pe^{(\log\log T)^{c}}} \frac{1}{p^{1/2 + it}}$ is very large, where $P = C\log T \log\log T$ and $c < 1$ and $C$ are suitable constants, and with some penalisation (something like $\frac{1}{\log(p/P)}$) of the larger $p$ in the interval that are harder to control. For a typical $t$, one expects this sum to have order roughly its standard deviation, namely
$$ \approx \sqrt{\sum_{e < \frac{p}{P} <
    e^{(\log\log T)^{c}}} \frac{1}{p \log(\frac{p}{P})} } \asymp \sqrt{\frac{1}{\log\log T} \sum_{e < \frac{n}{P} < e^{(\log\log T)^{c}}} \frac{1}{n \log(\frac{n}{P})} } \asymp \sqrt{\frac{\log\log\log T}{\log\log T} } , $$
using e.g.\ Chebychev's bounds for the density of the primes. So if we look for the largest values attained as $1 \leq t \leq T$ varies, then motivated by the second part of Lemma~\ref{maxnormlem} we could expect this to have size $\asymp \sqrt{\log T} \sqrt{\frac{\log\log\log T}{\log\log T} }$, and this is precisely the lower bound that Bondarenko and Seip are able to prove for zeta by computing the integrals with their choice of $R(t)$. Note that when applying Lemma~\ref{maxnormlem} to get an idea of what to expect here, it is unimportant whether we assume that varying over $1 \leq t \leq T$ corresponds to taking about $T$ independent samples (as would be the usual heuristic, see below), or $T^2$ or $\sqrt{T}$ samples (say), as the logarithms of all these quantities have the same order of magnitude.

\medskip
If we compare these upper and lower bound arguments, we see that both of them come down to analysing contributions from fairly small primes, of size $\log^{2}T$ at most. In the upper bound arguments, one would like to show some cancellation but is forced to resort to trivial estimates, whereas in the lower bounds one wants to show large values {\em are} attained. But considering the problem heuristically, there is no reason to believe that the extreme behaviour of $|\zeta(1/2+it)|$ should be dominated by the behaviour of these very small primes that we are forced to focus on due to methodological limitations.

We have the following conjecture of Farmer--Gonek--Hughes~\cite{farmergonekhughes} about the true size of $\max_{0 \leq t \leq T} |\zeta(1/2+it)|$.
\begin{conjecture}[Farmer, Gonek and Hughes, 2007]\label{fghconj}
We have
$$ \max_{0 \leq t \leq T} |\zeta(1/2+it)| =  \exp\Bigl\{\bigl(\frac{1}{\sqrt{2}} + o(1)\bigr)\sqrt{\log T \log\log T}\Bigr\} , $$
where the $o(1)$ term tends to $0$ as $T \rightarrow \infty$.
\end{conjecture}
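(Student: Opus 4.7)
The plan is to combine Principles~\ref{bprin1} and~\ref{bprin2} with the Keating--Snaith random-matrix moment conjecture for $\zeta$. By Principle~\ref{bprin2} one expects, for most $t \in [T, 2T]$, that $\log|\zeta(1/2+it)| \approx \Re\sum_{p \leq T} p^{-1/2-it}$; by Principle~\ref{bprin1} and Lemma~\ref{pvarlem}, the right-hand side behaves like a mean-zero Gaussian of variance $(1/2)\log\log T$. A crude application of Lemma~\ref{maxnormlem} to $\asymp T$ ``independent samples'' of such a Gaussian would predict a maximum of size $\sqrt{\log T \log\log T}$; this overshoots the conjectured value by a factor of $\sqrt{2}$, so I have to be more careful.

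For the upper bound I would pass to high moments. The Keating--Snaith conjecture predicts
\[ \frac{1}{T}\int_T^{2T} |\zeta(1/2+it)|^{2k}\,dt \sim g_k\,a_k\,(\log T)^{k^2}, \]
with $g_k = G(k+1)^2/G(2k+1)$ a Barnes $G$-function factor satisfying $\log g_k = -k^2\log k + \frac{3}{2}k^2 + O(k\log k)$ for large $k$, and $a_k$ an arithmetic factor of size $1$ to leading exponential order. Markov then gives $\meas\{t \in [T,2T] : |\zeta(1/2+it)| > V\} \ll T g_k a_k (\log T)^{k^2} V^{-2k}$; setting this $\asymp 1$ and minimising $\log V \approx \frac{\log T}{2k} + \frac{k}{2}(\log\log T - \log k) + O(k)$ in $k$ yields the optimum $k \sim \sqrt{2\log T/\log\log T}$ and
\[ \log V \sim \frac{1}{\sqrt 2}\sqrt{\log T \log\log T}. \]
The prefactor $1/\sqrt{2}$ is precisely the random-matrix correction coming from $g_k$: it is missed by a naive Gaussian argument because the moments of a genuine Euler product (or characteristic polynomial) fall short of a lognormal's moments for large~$k$. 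I would try to make this rigorous by adapting Soundararajan's proof of~(\ref{soundupper}), but with the prime cutoff $x$ tuned to the optimal $k$ rather than to $t$.

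For the lower bound, one would want to show that the high moments are in fact this large, and attained on a set of measure $\gtrsim 1$. A natural route is a refined Bondarenko--Seip-style resonator $R(t) = \sum_{m \in \mathcal{M}} r(m)\,m^{-it}$ with $r(m)$ supported on primes all the way up to $\asymp T$, rather than only up to $\asymp \log T \log\log T$ as in Theorem~\ref{bondseiplower}; one then computes $\int |R(t)|^2\,dt$ and $\int \zeta(1/2+it)|R(t)|^2\,dt$ via Lemma~\ref{bcorrlem} and hopes that their ratio reproduces the moment-method optimum.

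The hard part will be this lower bound. All known unconditional methods produce large values of $|\zeta|$ by coordinating $p^{-it}$ over a quite restricted range of primes, whereas the FGH prediction demands that \emph{all} primes $p \leq T$ conspire; and controlling the resonator integrals at the full range essentially requires a very sharp form of the Keating--Snaith conjecture in its $k \to \infty$ regime, which is well out of reach. Short of a full proof, one might hope to close the gap between Theorem~\ref{bondseiplower} and Conjecture~\ref{fghconj} by importing the multiplicative-chaos and freezing techniques that the rest of this paper will develop in the short-interval setting.
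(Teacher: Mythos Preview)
This statement is a \emph{conjecture}, and the paper does not prove it; it only offers a heuristic in its support. Your write-up is likewise a heuristic (as you acknowledge towards the end), so on that level the two are on equal footing. But the route you take to the constant $1/\sqrt{2}$ is genuinely different from the one the paper presents.

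The paper's heuristic does not invoke the Keating--Snaith moment conjecture or Barnes $G$ asymptotics at all. Instead it starts, as you do, from the naive Gaussian model with variance $(1/2)\log\log T$ (Montgomery's observation, yielding constant $1$), and then corrects it using Soundararajan's inequality~\eqref{soundupper}: choosing the cutoff $x = e^{\sqrt{\log T}}$ there makes the error term $O(\sqrt{\log T})$ negligible, and the truncated prime sum $\Re\sum_{p \le e^{\sqrt{\log T}}} p^{-1/2-it}$ now has variance $\sim (1/4)\log\log T$ by Lemma~\ref{pvarlem}. Feeding this halved variance back into the same Gaussian-plus-independence heuristic (Lemma~\ref{maxnormlem} with $n \approx T$) gives $\sqrt{2\log T}\cdot\sqrt{(1/4)\log\log T} = \tfrac{1}{\sqrt{2}}\sqrt{\log T\log\log T}$ directly. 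So in the paper the factor $1/\sqrt{2}$ comes from \emph{halving the variance by truncating the prime sum}, not from the $-k^2\log k$ in $\log g_k$.

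Your moment-method heuristic is correct as far as it goes (the optimisation over $k$ you sketch does produce $1/\sqrt{2}$, and this is essentially one of Farmer--Gonek--Hughes's own original arguments), but it imports a much heavier conjectural input: the full Keating--Snaith prediction uniformly in $k \asymp \sqrt{\log T/\log\log T}$, which is far stronger than anything else used in the paper. The paper's version is more economical---it needs only RH (for~\eqref{soundupper}) and the elementary Lemmas~\ref{pvarlem} and~\ref{maxnormlem}---and it fits better with the ``Basic Principles'' framework being developed. On the other hand, your approach makes explicit the link to random matrix theory and to the moment problem, and your remarks on the lower bound (resonators over primes up to $T$, and the obstruction that known methods only coordinate small primes) are apt and complementary to what the paper says.
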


Farmer, Gonek and Hughes supply various arguments in support of this conjecture, including a random matrix model, a random primes model (essentially Principle~\ref{bprin1}), and a combination of these. If we assume that something like the Selberg Central Limit Theorem remains valid in a very large deviations regime (so that $\log|\zeta(1/2+it)| \approx N(0,(1/2)\log\log T)$), and further assume that varying over $0 \leq t \leq T$ corresponds to taking about $T$ independent samples, then Lemma~\ref{maxnormlem} would suggest that
$$ \max_{0 \leq t \leq T} \log|\zeta(1/2+it)| \approx \sqrt{2\log T} \sqrt{(1/2)\log\log T} = \sqrt{\log T \log\log T} . $$
So this rather simple approach gives a conjecture of the same shape as Conjecture~\ref{fghconj}, although with a constant 1 instead of $1/\sqrt{2}$ in the exponent. Farmer, Gonek and Hughes~\cite{farmergonekhughes} credit this observation to Montgomery. If one combines this simple line of argument with the bound \eqref{soundupper}, one can in fact recover Conjecture~\ref{fghconj} exactly. Thus if $T \leq t \leq 2T$, and we take $x = e^{\sqrt{\log T}}$ in \eqref{soundupper}, we get (assuming RH)
$$ \log|\zeta(1/2+it)| \lesssim \Re \sum_{p \leq e^{\sqrt{\log T}}} \frac{1}{p^{1/2+it}} + O\bigl(\sqrt{\log T} \bigr) . $$
This choice of $x$ is basically the smallest possible such that the ``big Oh'' term will be of small order compared with the lower bounds we have. (If the reader prefers, he or she could take $x = e^{\sqrt{\log T \log\log T}}$ so the ``big Oh'' term would really be smaller than the lower bound we know from Theorem~\ref{bondseiplower}. This will make no difference to the conjecture we shall derive, it would just be messier to write!) Then using Lemma~\ref{pvarlem}, the mean square of $\Re \sum_{p \leq e^{\sqrt{\log T}}} \frac{1}{p^{1/2+it}}$ is $\sim (1/4)\log\log T$. So now if we assume that this sum will behave like a $N(0,(1/4)\log\log T)$ random variable, then Lemma~\ref{maxnormlem} suggests that
\begin{align*}
  \max_{0 \leq t \leq T} \log|\zeta(1/2+it)| \approx
  & \sqrt{2\log T} \sqrt{\frac{1}{4} \log\log T} +O(\sqrt{\log T}) \\
  =& \sqrt{\frac{1}{2} \log T \log\log T} + O(\sqrt{\log T}) .
\end{align*}

\medskip

The key thing that links all the heuristic arguments leading to Conjecture~\ref{fghconj}, and other conjectures of the same shape, is the assumption of some {\em independence} somewhere. One generally assumes independence in the values of $\zeta(1/2+it_1), \zeta(1/2+it_2)$ when $t_1, t_2$ are sufficiently far apart (e.g.\ $|t_1 - t_2| > 1$, although as noted above the analysis isn't very sensitive to the details of this). One also assumes some independence in modelling the value distribution of zeta at a single point (this is explicit in the random primes model/Principle~\ref{bprin1}, in the random matrix models it is less explicit but there is still much independence in the definition of the random matrices and in their behaviour).

In contrast, if one believes that instead of independence there could be an {\em extreme conspiracy} in the values of the $p^{-it}$ for $p$ small, then one might reasonably believe that the upper bound in Theorem~\ref{littlewoodupper} is closer to the truth. Farmer, Gonek and Hughes~\cite{farmergonekhughes} discuss this in the final section of their paper. The author tends to prefer the independence to the conspiracy assumption, but it is hard to see how one can really distinguish between these possibilities short of actually determining the size of $\max_{0 \leq t \leq T} |\zeta(1/2+it)|$, which we are probably still far from doing.

\section{The conjecture of Fyodorov--Hiary--Keating}
Whereas the Selberg Central Limit Theorem gives, unconditionally, a full description of the typical behaviour of $\log|\zeta(1/2+it)|$ (at least to an initial level of precision), we have seen that our understanding of the largest values attained by $\log|\zeta(1/2+it)|$ is far less complete. Why is this? One answer is that the largest values attained by $\log|\zeta(1/2+it)|$ correspond to very low probability events (i.e.\ sets of $t$ with measure much smaller than $T$), far in the tails of the distribution. Even in a purely probabilistic setting, such problems can present considerable difficulties. For example, the quantitative error terms in probabilistic central limit theorems are often relatively large, so they become much less useful when directly applied to rare events.

Fyodorov and Keating~\cite{fyodkeat} and Fyodorov, Hiary and Keating~\cite{fyodhiarykeat} recently initiated study of a problem that is intermediate between these two regimes (although rather closer to the typical behaviour than the largest values). 
\begin{problem}\label{shortintervalsprob}
As $T \leq t \leq 2T$ varies, how is $\max_{0 \leq h \leq 1} |\zeta(1/2 + it + ih)|$ distributed?
\end{problem}
Note that for some $T \leq t \leq 2T$, the interval $[t,t+1]$ will contain a point $t^{*}$ for which $|\zeta(1/2 + it^{*})| = \max_{T \leq t \leq 2T} |\zeta(1/2 + it)|$, whose size we don't understand well. But since Problem~\ref{shortintervalsprob} is a distributional question, this small subset of $t$ can be ignored (just as in the statement of the Selberg Central Limit Theorem one needn't worry about the zeros of zeta) and one can hope to have a tractable yet interesting question.

\medskip

The next obvious query is whether we should expect the behaviour of the short interval maximum $\max_{0 \leq h \leq 1} |\zeta(1/2 + it + ih)|$ to be much different than the behaviour of $|\zeta(1/2 + it)|$? At first glance, taking the maximum over an interval of bounded length might not be expected to alter things too significantly, in which case the answer to Problem~\ref{shortintervalsprob} might be a result of a similar shape to the Selberg Central Limit Theorem.

Here is a heuristic line of argument that suggests the distributional behaviour of $\max_{0 \leq h \leq 1} |\zeta(1/2 + it + ih)|$ could actually be a lot different than the behaviour at a single point. For the sake of this argument we shall make three temporary assumptions, then later we will examine how reasonable these are.
\begin{itemize}
\item(Assumption 1) The Selberg Central Limit Theorem remains valid even some way into the tails of the probability distribution, in other words the left hand side of Theorem~\ref{selbergclt} is still well approximated by $\Phi(z)$ even when $z$ grows with $T$ ``at a suitable rate''.

\item(Assumption 2) As $T \leq t \leq 2T$ varies, the values $|\zeta(1/2+it+ih_1)|, |\zeta(1/2+it+ih_2)|$ are ``roughly the same'' when $|h_1 - h_2| \leq 1/\log T$.

\item(Assumption 3) As $T \leq t \leq 2T$ varies, the values $|\zeta(1/2+it+ih_1)|, |\zeta(1/2+it+ih_2)|$ behave ``roughly independently'' when $|h_1 - h_2| > 1/\log T$.
\end{itemize}

Much of our analysis will duplicate steps from the proof of Lemma~\ref{maxnormlem}, but we will write it out explicitly for ease of reference and to attain greater precision at some points.

If Assumption 2 is correct, then we have
$$ \max_{0 \leq h \leq 1} |\zeta(1/2 + it + ih)| \approx \max_{1 \leq j \leq \log T} \Bigl|\zeta\Bigl(1/2 + it + i\frac{j}{\log T}\Bigr)\Bigr| . $$

Then for any real $u$, we have the simple union upper bound:
\begin{align*}
& \frac{1}{T} \meas\Bigl\{T \leq t \leq 2T : \max_{1 \leq j \leq \log T} \Bigl|\zeta\Bigl(1/2+it + i\frac{j}{\log T}\Bigr)\Bigr| \geq e^{u} \Bigr\}  \\
& \leq  \sum_{1 \leq j \leq \log T} \frac{1}{T} \meas\Bigl\{T \leq t \leq 2T : \log\Bigl|\zeta\Bigl(1/2+it + i\frac{j}{\log T}\Bigr)\Bigr| \geq u \Bigr\} 
\end{align*}
If Assumption 1 is correct, and if we assume to simplify the writing that $u \geq \sqrt{\log\log T}$, then each summand here will be
$$ \approx \p\biggl(N(0,1) \geq \frac{u}{\sqrt{(1/2)\log\log T}}\biggr) \approx \frac{\sqrt{\log\log T}}{u} e^{-u^{2}/\log\log T} . $$
In particular, if $u = \log\log T - (1/4)\log\log\log T + U$ for some $U \geq 0$ then the right hand side is
$$ \ll \frac{1}{\sqrt{\log\log T}} e^{-(\log\log T - (1/4)\log\log\log T + U)^{2}/\log\log T} \ll \frac{1}{\log T} e^{-2U} e^{-\Theta(U^{2}/\log\log T)} . $$
Summing over $1 \leq j \leq \log T$, we find that if $U$ is large then the sum will be small, in other words we can expect that for most $t$, the maximum $\max_{0 \leq h \leq 1} |\zeta(1/2 + it + ih)|$ has size {\em at most} $e^{\log\log T - (1/4)\log\log\log T + O(1)}$.

For a lower bound, we note that if Assumption 3 is correct then for any $u \in \R$,
\begin{align*}
& \frac{1}{T} \meas\Bigl\{T \leq t \leq 2T : \max_{1 \leq j \leq \log T} \Bigl|\zeta\Bigl(1/2+it + i\frac{j}{\log T}\Bigr)\Bigr| \leq e^{u} \Bigr\}  \\
 \approx & \prod_{1 \leq j \leq \log T} \frac{1}{T} \meas\Bigl\{T \leq t \leq 2T : \log\Bigl|\zeta\Bigl(1/2+it + i\frac{j}{\log T}\Bigr)\Bigr| \leq u\Bigr\} .
\end{align*}
And using Assumption 1 as before to estimate each term in the product, we find the above is
$$ \approx \left(1 - \frac{\sqrt{\log\log T}}{u} e^{-u^{2}/\log\log T} \right)^{\lfloor \log T \rfloor} . $$
In particular, if we take $u = \log\log T - (1/4)\log\log\log T - U$ for some fixed $U \geq 0$ (note that we have $-U$ here, not $U$) then each bracket will be $\approx (1 - \frac{e^{2U}}{\log T})$, and the product of $\lfloor \log T \rfloor$ copies of this will be small. So we can expect that for most $t$, the maximum $\max_{0 \leq h \leq 1} |\zeta(1/2 + it + ih)|$ has size {\em at least} $e^{\log\log T - (1/4)\log\log\log T + O(1)}$ as well.

\medskip
We shall revisit this heuristic argument later, but record a few immediate observations. Firstly, the typical size of the maximum derived above is close to $e^{\log\log T}$, as opposed to the size $e^{\Theta(\sqrt{\log\log T})}$ at a typical point $t$ provided by the Selberg Central Limit Theorem. So, if the above heuristic is roughly accurate, there should be a real difference between these situations. Note, however, that this size is still much smaller than the regime considered in Theorems~\ref{littlewoodupper} and~\ref{bondseiplower}, so we are much less far into the tails of the distribution and can have hopes of a good rigorous analysis of the situation.

Another striking contrast is that in the Selberg Central Limit Theorem, the distribution of $\log|\zeta(1/2+it)|$ is shown to have mean zero and to vary around this on a scale of $\sqrt{\log\log T}$. In our heuristic for the short interval maximum of log zeta, the random variation occurs on a {\em smaller} scale $O(1)$, whilst one has a deterministic main term of size $\sim \log\log T$.

Let us also note that Assumption 3, the independence assumption, was only required for the proof of the lower bound. Thus one might suspect, and it will turn out to be the case, that it should be easier to make our heuristic argument rigorous for the upper bound than for the lower bound.

\medskip
As well as proposing the study of Problem~\ref{shortintervalsprob}, Fyodorov, Hiary and Keating~\cite{fyodhiarykeat, fyodkeat} also made a precise conjecture about the answer.

\begin{conjecture}[Fyodorov--Hiary--Keating, 2012]\label{fhkconj}
For any real function $g(T)$ that tends to infinity with $T$, we have that
$$ \frac{1}{T} \meas\bigl\{0 \leq t \leq T : \bigl|\max_{|h| \leq 1} \log|\zeta(1/2+it+ih)| - (\log\log T - (3/4)\log\log\log T)\bigr| \leq g(T) \bigr\} $$
tends to 1 as $T \rightarrow \infty$.
\end{conjecture}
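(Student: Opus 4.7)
The plan is to reduce Problem~\ref{shortintervalsprob} to a probabilistic statement about a Dirichlet polynomial and then to recognise the resulting random field as a realisation of a branching random walk, whose maxima are classical objects in probability theory. Concretely, invoking Principle~\ref{bprin2} (quantified by Proposition~\ref{radzsoundapprox} applied on a sufficiently fine $h$-grid), one expects
\[ \log|\zeta(1/2+it+ih)| \approx D(t,h) := \Re\sum_{p \leq P} \frac{1}{p^{1/2+it+ih}} , \]
so it suffices to study $\max_{|h|\leq 1} D(t,h)$ for a typical $T \leq t \leq 2T$. Decompose $D(t,h) = \sum_{k=0}^{K} S_k(t,h)$ with $S_k(t,h) := \Re\sum_{e^{e^k} < p \leq e^{e^{k+1}}} p^{-1/2-it-ih}$ and $K \approx \log\log T$; by Lemma~\ref{pvarlem} each $S_k$ has variance $\sim 1/2$ over the $t$-average, and by Lemma~\ref{bcorrlem} the blocks $S_0,\ldots,S_K$ are essentially independent. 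The crucial further feature is that $S_j(t,h)$ oscillates in $h$ on scale $\asymp e^{-e^j}$: if $|h_1-h_2| \asymp e^{-e^k}$ then $S_j(t,h_1) \approx S_j(t,h_2)$ for $j < k$ while $S_j(t,h_1)$ and $S_j(t,h_2)$ decorrelate for $j \geq k$. The process $k \mapsto \sum_{j \leq k} S_j(t,h)$ therefore behaves like a random walk of length $K$ whose trajectories indexed by different $h$'s bifurcate at times dictated by $|h_1-h_2|$---the structure of a branching random walk with $\asymp e^{K} = \log T$ leaves.

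For the upper bound, the Section~3 heuristic based on a naive union bound recovers only $\log\log T - (1/4)\log\log\log T$, because Gaussian tails give $\p(\sum_{j \leq K} S_j > m) \asymp K^{-1/2} e^{-m^2/K}$ and one balances against $e^K$ grid points. The sharper $(3/4)$ correction emerges by restricting to trajectories that stay below the linear barrier $k \mapsto (k/K)m$ throughout $0 \leq k \leq K$: a ballot/reflection argument reduces the relevant tail to $\asymp K^{-3/2} e^{-m^2/K}$, and rebalancing against $e^K$ now forces $m = K - (3/2)\log K + O(1)$. This barrier restriction is legitimate because, by a high-moment computation using Lemma~\ref{bcorrlem}, trajectories exceeding the barrier at some intermediate level $k$ contribute negligibly: their descendants cannot reasonably attain the target height at level $K$. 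Passing from $D$ to $\log|\zeta|$ then costs only $o(1)$ via Proposition~\ref{radzsoundapprox} combined with a standard grid-refinement argument.

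For the matching lower bound, the plan is a truncated second moment. Let $N(t)$ count the $h$ in a $1/\log T$-spaced grid for which $D(t,h) \geq \log\log T - (3/4)\log\log\log T - g(T)$ \emph{and} the partial-sum trajectory $k \mapsto \sum_{j\leq k} S_j(t,h)$ respects the barrier. Lemma~\ref{bcorrlem} lets one compute $\frac{1}{T}\int_T^{2T} N(t)\,dt$ and its square by reduction to the corresponding Gaussian tree-indexed walk: without the barrier the ratio $\E N^2 / (\E N)^2$ would be enormous because pairs $(h_1,h_2)$ whose common ancestor survives to high level dominate, but the barrier restriction is exactly what forces $\E N^2 \ll (\E N)^2$, as in Bramson's analysis of branching Brownian motion. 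The Paley--Zygmund inequality then yields $N(t) \geq 1$ on a positive density set of $t$; further concentration arguments, together with $P \to \infty$, upgrade this to density~$1$.

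The hard part is implementing the lower bound: unlike the upper bound it requires genuine \emph{lower} control on $|\zeta(1/2+it+ih)|$ uniformly over $|h| \leq 1$, the statement most directly obstructed by the zeros of zeta near the critical line. Coupling $\log|\zeta|$ to $D$ uniformly in $h$, on an exceptional set of measure $o(T)$, is therefore the central difficulty, and the barrier-constrained second moment itself must be robust enough to survive this coupling. The partial results of Najnudel and of Arguin--Belius--Bourgade--Radziwi\l\l--Soundararajan to be described below correspond to increasingly fine implementations of precisely this strategy---at present yielding only a leading $\log\log T$ term unconditionally, and sharper constants only under stronger hypotheses---so the sharp constant $(3/4)$ of Conjecture~\ref{fhkconj} remains, at the time of writing, beyond reach.
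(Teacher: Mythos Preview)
This statement is a \emph{conjecture}: the paper does not prove it, and you correctly acknowledge at the end that it remains open. What the paper offers in lieu of a proof is (i) the original Fyodorov--Hiary--Keating motivation via partition functions and random-matrix/statistical-mechanics analogies, and (ii) a heuristic based on the logarithmic correlation structure of $\Re\sum_{p\leq P} p^{-1/2-it-ih}$, together with citations of Theorems~\ref{harperthm} and~\ref{abharpthm} which establish the analogue for the randomised Euler product. Your sketch is essentially a more explicit unpacking of route (ii): you spell out the branching-random-walk mechanism (barrier/ballot estimate for the upper bound, barrier-constrained second moment plus Paley--Zygmund for the lower bound) that underlies those cited theorems, whereas the paper treats them as black boxes. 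So your write-up and the paper's discussion are aligned and complementary rather than in conflict.

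One arithmetic slip to fix: when you rebalance $e^{K}\cdot K^{-3/2}\,e^{-m^{2}/K}\asymp 1$ you get $m^{2}/K = K - (3/2)\log K$, hence
\[
m \;=\; K\sqrt{1-\tfrac{3}{2}\tfrac{\log K}{K}} \;=\; K-\tfrac{3}{4}\log K+o(1),
\]
not $K-(3/2)\log K$. The square root halves the coefficient, and that halving is exactly what produces the constant $3/4$ in the conjecture. (The same remark applies to the naive union-bound version: $K^{-1/2}$ leads to a subtracted $(1/4)\log K$, not $(1/2)\log K$, consistent with the paper's earlier heuristic.)
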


In fact, Fyodorov--Hiary--Keating make an even more precise conjecture than this, about the {\em distribution} of the difference between $\max_{|h| \leq 1} \log|\zeta(1/2+it+ih)|$ and $(\log\log T - (3/4)\log\log\log T)$. But this seems far beyond anything that is rigorously attackable at present, so we shall not discuss it further here. We also note that the choice of the interval $|h| \leq 1$ is rather arbitrary, and in fact Fyodorov--Hiary--Keating looked primarily at the interval $0 \leq h \leq 2\pi$, which corresponds more naturally with the random matrix setting. But one will have an analogous conjecture and results for any interval of fixed non-zero length.

Fyodorov, Hiary and Keating were led to their conjecture via a two step process, which we shall briefly explain. For given $t$, in order to understand the behaviour of $\max_{|h| \leq 1} \log|\zeta(1/2+it+ih)|$ one might try to compute quantities such as
$$ \int_{|h| \leq 1} e^{2\beta \log|\zeta(1/2+it+ih)|} dh = \int_{t-1}^{t+1} |\zeta(1/2+ iw)|^{2\beta} dw , $$
for varying $\beta > 0$. The idea is that as $\beta$ becomes larger, the size of the integral will be increasingly dominated by the largest values attained by $\log|\zeta(1/2+it+ih)|$. In the language of mathematical physics, this kind of integral is the {\em partition function} associated with $\log|\zeta(1/2+it+ih)|$. Since we are interested in what happens as $t$ varies, we could further try to understand this by computing quantities such as
$$ \int_{0}^{T} \left( \int_{t-1}^{t+1} |\zeta(1/2+ iw)|^{2\beta} dw \right)^{q} dt , $$
where now $q > 0$ is a further parameter. For given $\beta$, if we can understand the size of these integrals for all (or many) $q$ we might hope to get a good understanding of the distribution of $\int_{t-1}^{t+1} |\zeta(1/2+ iw)|^{2\beta} dw$. And in turn, if one can understand this for suitable $\beta$ one might hope to get a good understanding of $\max_{|h| \leq 1} \log|\zeta(1/2+it+ih)|$.

To understand how all these objects might behave, Fyodorov, Hiary and Keating turned to the well known idea that $\zeta(1/2+it)$ behaves like the characteristic polynomial of suitable random matrices. In the random matrix setting, they were able to compute the analogous integrals for a certain range of $q \in \N$ (depending on $\beta$), when $\beta < 1$. Although this amount of information is {\em not} sufficient to rigorously draw conclusions about the maximum, even in the random matrix setting, they noticed that the quantities computed agreed with some analogous integrals arising in statistical mechanics. The Fyodorov--Hiary--Keating conjecture then arises from supposing that characteristic polynomials of random matrices, and further the Riemann zeta function, behave in the way suggested by those statistical mechanics models.

We shall not say more about Fyodorov, Hiary and Keating's motivation for their conjecture, referring the reader instead to the original papers~\cite{fyodhiarykeat, fyodkeat}, which also describe some interesting numerical evidence. We just note that one of the important features of their statistical mechanics problem is a {\em logarithmic correlation structure}, which we shall discuss much further below. We also note that some parts of Fyodorov, Hiary and Keating's conjectures in the random matrix setting, and about the partition function $\int_{t-1}^{t+1} |\zeta(1/2+ iw)|^{2\beta} dw$, have recently been proved using ideas related to those we shall describe here. See the papers~\cite{abbrandmat, argouiradz, paqzeit}, for example.

\medskip
Conjecture~\ref{fhkconj} suggests that our earlier heuristic analysis isn't quite right, but almost, since the first order term $\log\log T$ that we obtained was the same. But this suggestion is a little misleading. As we shall now explain, it is possible to modify the heuristic to give another supporting heuristic for Conjecture~\ref{fhkconj} (and possible to prove some of this rigorously, as we shall come to later), but this requires quite careful thought about our Assumptions 2 and 3.

Recall that we assumed earlier that $|\zeta(1/2+it+ih_1)|, |\zeta(1/2+it+ih_2)|$ are ``roughly the same'' when $|h_1 - h_2| \leq 1/\log T$, and ``roughly independent'' when $|h_1 - h_2| > 1/\log T$. The reason for these starting assumptions is that when $T \leq t \leq 2T$ is large, we have rigorously (the Hardy--Littlewood approximation) that
$$ \zeta(1/2 + it) = \sum_{n \leq T} \frac{1}{n^{1/2+it}} + O\Bigl(\frac{1}{\sqrt{T}}\Bigr) , $$
and we have heuristically (as in Principle~\ref{bprin2}) that
$$ \log|\zeta(1/2+it)| \approx \Re \sum_{p \leq T^{1/3}} \frac{1}{p^{1/2+it}} , $$
say. In both of these expressions, the most rapidly varying terms are of the form $e^{-it\log n}$ with $\log n \asymp \log T$. Thus if $t$ varies by less than $1/\log T$, we can expect the sums not to change much, but if $t$ varies by more one starts to see significant variation. (Another possible justification is that the average spacing between imaginary parts of zeta zeros around $T$ is $\asymp 1/\log T$.)

The assumption that $\zeta(1/2+it)$ doesn't usually change much when $t$ varies by less than $1/\log T$ is actually very reasonable, at least if one replaces $1/\log T$ by something slightly smaller such as $1/\log^{1.01}T$. But if we look at the sum $\Re \sum_{p \leq T^{1/3}} \frac{1}{p^{1/2+it}}$, although it is true that the terms with $p \approx T^{1/3}$ start to vary when $t$ shifts by more than $1/\log T$, the smaller terms in the sum don't change until $t$ shifts by much more. In some situations (e.g.\ if we looked at $\sum_{p \leq T^{1/3}} p^{-it}$), the size of a sum is dominated by the final terms and so this effect wouldn't matter, but in $\sum_{p \leq T^{1/3}} \frac{1}{p^{1/2+it}}$ the contributions from different parts of the sum are typically much more equal. So $|\zeta(1/2+it+ih_1)|, |\zeta(1/2+it+ih_2)|$ will {\em not} behave entirely independently just because $|h_1 - h_2| > 1/\log T$.

\medskip
To explain this more precisely, note that we can decompose
\begin{equation}\label{decompeq}
\Re \sum_{p \leq T^{1/3}} \frac{1}{p^{1/2+it}} = \sum_{0 \leq k \leq \log\log T} \Re \sum_{e^{e^{k-1}} < p \leq \min\{e^{e^{k}}, T^{1/3}\}} \frac{1}{p^{1/2+it}} .
\end{equation}
By Principle~\ref{bprin1}, since the inner sums here involve disjoint sets of primes we expect them to behave independently of one another as $t$ varies. The reason for decomposing into sums on these ranges is that, by Lemma~\ref{pvarlem}, each inner sum has very small mean value and has mean square
$$ \frac{1}{2} \log\Bigl(\frac{e^k}{e^{k-1}}\Bigr) + O\Bigl(\frac{1}{e^{100k}} + \frac{T^{2/3}}{T}\Bigr) = \frac{1}{2} + O\Bigl(\frac{1}{e^{100k}}\Bigr) . $$
In other words, we have split up into pieces whose typical orders of magnitude are comparable. And for all the terms in the $k$-th sum we have $\log p \asymp e^k$, so the scale of $t$ on which this sum doesn't change much is {\em not} $1/\log T$, but the wider scale $1/e^k$.

Another way to capture this phenomenon is to calculate the {\em correlation} between $\Re \sum_{p \leq T^{1/3}} \frac{1}{p^{1/2+it}}$ and $\Re \sum_{p \leq T^{1/3}} \frac{1}{p^{1/2+it+ih}}$. By exactly the same kind of argument as in Lemma~\ref{pvarlem}, one can show that
\begin{multline*}
  \frac{1}{T} \int_{T}^{2T} \Bigl(\Re \sum_{p \leq T^{1/3}} \frac{1}{p^{1/2+it}}\Bigr) \Bigl(\Re \sum_{p \leq T^{1/3}} \frac{1}{p^{1/2+it+ih}}\Bigr) dt \approx\\ \left\{ \begin{array}{ll}
     (1/2)\log\log T & \text{if} \; |h| \leq 1/\log T \\
     (1/2)\log(1/|h|) & \text{if} \; 1/\log T < |h| \leq 1.
\end{array} \right. 
\end{multline*}
Thus if $|h| \leq 1/\log T$, this average is roughly the same size as the mean square of $\Re \sum_{p \leq T^{1/3}} \frac{1}{p^{1/2+it}}$, in other words the sums are almost perfectly correlated and behave in the same way. As $|h|$ increases, so more and more sums at $t$ and $t+h$ in the decomposition \eqref{decompeq} become decoupled, the correlation goes down and the behaviour of $\Re \sum_{p \leq T^{1/3}} \frac{1}{p^{1/2+it}}$ and $\Re \sum_{p \leq T^{1/3}} \frac{1}{p^{1/2+it+ih}}$ becomes increasingly different.

It is a general principle that, given Gaussian random variables with mean zero and equal variances, if they are positively correlated then the maximum is smaller (in a distributional sense) than if they were independent. For example, this follows from a very useful probabilistic result called Slepian's Lemma. This is somewhat intuitive, since positive correlations mean that the random variables tend to be big or small together, so we have fewer ``genuinely independent'' tries at obtaining a very large value. Thus we can see, in quite a soft way, that if $\log|\zeta(1/2+it+ih_1)|, \log|\zeta(1/2+it+ih_2)|$ are positively correlated (rather than independent) when $|h_1 - h_2| > 1/\log T$, then $\max_{0 \leq h \leq 1} |\zeta(1/2 + it + ih)|$ should be smaller than our initial analysis predicted. This is fully consistent with Conjecture~\ref{fhkconj}.

There is no such soft argument for determining exactly  {\em how much smaller} we should expect the maximum to be in the presence of positive correlation, but in recent years the probabilistic tools to do this have become available.

\begin{theorem}[Harper, 2013]\label{harperthm}
Let $(X_p)_{p \; \textup{prime}}$ be a sequence of independent random variables, each distributed uniformly on the complex unit circle. Then with probability tending to 1 as $T \rightarrow \infty$, we have
\begin{align*}
&  \max_{|h| \leq 1}  \Re \sum_{p \leq T} \frac{X_p}{p^{1/2+ih}} \geq \log\log T - 2\log\log\log T - C(\log\log\log T)^{3/4} \\
\text{and}\quad
  &  \max_{|h| \leq 1}  \Re \sum_{p \leq T} \frac{X_p}{p^{1/2+ih}} \leq  \log\log T - (1/4)\log\log\log T + C\sqrt{\log\log\log T} ,
\end{align*}
where $C > 0$ is a certain absolute constant.
\end{theorem}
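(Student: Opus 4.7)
The plan is to regard $Y(h) := \Re \sum_{p \leq T} X_p/p^{1/2+ih}$ as a Gaussian-like random field on $|h| \leq 1$ with variance $(1/2)\log\log T + O(1)$ (independent of $h$) and logarithmic covariance
$\E Y(h_1) Y(h_2) = (1/2) \sum_{p \leq T} \cos((h_1-h_2)\log p)/p \approx (1/2)\min\{\log\log T,\log(1/|h_1-h_2|)\}$,
by the same type of calculation as in Lemma~\ref{pvarlem}. Since the individual terms have frequencies $\log p \leq \log T$, a derivative estimate (Bernstein-type inequality for the trigonometric polynomial $Y$) reduces the supremum over $|h|\leq 1$ to the maximum over a net of $\asymp \log T$ points at spacing $1/\log T$. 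For each fixed $h$, $Y(h)$ is a sum of independent bounded variables with variance $\asymp \log\log T$, so the high moments can be computed via Lemma~\ref{bcorrlem} and compared with those of a genuine Gaussian, yielding $\p(Y(h) \geq u) \ll (\sqrt{\log\log T}/u) e^{-u^{2}/\log\log T}$ throughout the relevant range of $u$. The union bound at $u = \log\log T - (1/4)\log\log\log T + C\sqrt{\log\log\log T}$ then gives the upper bound, reproducing exactly the naive heuristic of the previous section.

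\textbf{Lower bound via scale decomposition.} For the lower bound I would exploit the logarithmic correlations directly. Decompose
$$ Y(h) = \sum_{k=0}^{K} Y_{k}(h), \qquad Y_{k}(h) := \Re \sum_{e^{e^{k-1}} < p \leq e^{e^{k}}} \frac{X_{p}}{p^{1/2+ih}}, \qquad K \sim \log\log T, $$
so that the $Y_{k}$ are mutually independent, each has variance $\approx 1/2$ by Lemma~\ref{pvarlem}, and $Y_{k}(h_{1}), Y_{k}(h_{2})$ are almost perfectly correlated when $|h_{1}-h_{2}| \lesssim e^{-e^{k}}$ and essentially independent for larger separations. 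The partial sums $S_{k}(h) := \sum_{k' \leq k} Y_{k'}(h)$ thus behave like a branching random walk of depth $K$ with Gaussian increments of variance $1/2$ and an (exponentially growing) branching structure in the $h$-variable, whose maximum is known to have leading order $\sqrt{2 \cdot (1/2)}\cdot K = \log\log T$ with a logarithmic correction of order $\log\log\log T$.

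\textbf{Second moment with barrier.} To turn this into a rigorous lower bound I would run a truncated second moment/Paley--Zygmund argument. Let $m := \log\log T - 2\log\log\log T - C(\log\log\log T)^{3/4}$, fix a net of points $h$ on $[-1,1]$ of spacing $\asymp 1/T$, and set
$$ N := \sum_{h} \mathbf{1}\{Y(h) \geq m\} \prod_{k=0}^{K} \mathbf{1}\{S_{k}(h) \leq \lambda_{k}\}, \qquad \lambda_{k} := (k/K) m + \Delta_{k}, $$
where $\Delta_{k}$ is a concave correction of size $O((\log\log\log T)^{3/4})$. The barrier ensures the walk reaches height $m$ ``along the straight line'' rather than via an early spike, which is essential to keep the second moment under control. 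The first moment is estimated by standard Gaussian tail computations for the increments $Y_{k}$, using that the moment calculations of Lemma~\ref{bcorrlem} make the $Y_{k}$ effectively Gaussian on relevant scales. For the second moment, two net points at distance $\asymp e^{-e^{k}}$ share $Y_{0},\ldots,Y_{k}$ and are independent thereafter, so the contribution of such pairs splits into a barrier-constrained walk up to level $k$ (whose probability one estimates by a ballot-type bound $\asymp 1/\sqrt{k}$) times the probability that two independent Gaussian walks each climb to $m$ from $\lambda_{k}$; the combinatorial factor $\asymp e^{e^{k}}$ is then exactly cancelled by the barrier. Applying $\p(N > 0) \geq (\E N)^{2}/\E N^{2}$ gives the desired lower bound. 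The main obstacle is calibrating $\Delta_{k}$ so that $\E N^{2} \ll (\E N)^{2}$ holds with the stated constant $-2$, which requires a careful ballot/Brownian-bridge computation, together with justifying that the (non-Gaussian) $Y_{k}$ obey the Gaussian-type barrier estimates to the required precision--- this can be done either by a Lindeberg swapping argument or by working directly with the moments of $Y_{k}$ via Lemma~\ref{bcorrlem}.
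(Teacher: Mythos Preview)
Your upper bound argument --- discretise to a net, then apply a union bound with Gaussian-type tail estimates for each $Y(h)$ --- is exactly the approach the paper attributes to Harper.

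Your lower bound, however, takes a different route from the one the paper describes. According to the paper, Harper's original proof in~\cite{harperlcz} does \emph{not} run a direct second-moment-with-barrier computation; instead it feeds the logarithmic covariance structure of $Y(h)$ into the general lower bound machinery for suprema of (approximately Gaussian) random processes developed in~\cite{harpergp}. That route is more abstract --- one verifies covariance hypotheses and reads off the conclusion --- and is what produces the weaker constant $-2$ in the lower bound. The explicit branching-random-walk second moment argument you sketch is essentially the method of Arguin--Belius--Harper~\cite{abh}, which the paper cites as the proof of the \emph{sharper} Theorem~\ref{abharpthm} with constant $-3/4$. So your approach is valid, but it is really a proof of the stronger statement; there is no need to calibrate the barrier specifically to hit $-2$.

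One genuine correction: your scales are off by an exponential throughout. The increment $Y_k$ involves primes with $\log p \asymp e^k$, so $Y_k(h_1)$ and $Y_k(h_2)$ are highly correlated when $|h_1 - h_2| \lesssim e^{-k}$, \emph{not} $e^{-e^k}$ (compare the paper's remark just after~\eqref{decompeq}). Correspondingly the net spacing should be $\asymp 1/\log T$, not $1/T$, giving $\asymp \log T$ leaves on a tree of depth $K \sim \log\log T$ with branching factor $\asymp e$; the combinatorial factor for pairs branching at level $k$ is then of order $e^{k}$, not $e^{e^{k}}$. With these scales corrected the structure of your argument goes through.
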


\begin{theorem}[Arguin, Belius and Harper, 2017]\label{abharpthm}
Let $(X_p)_{p \; \textup{prime}}$ be a sequence of independent random variables, each distributed uniformly on the complex unit circle. Then for any $\epsilon > 0$, with probability tending to 1 as $T \rightarrow \infty$ we have
\begin{align*}
& \max_{|h| \leq 1} \Re \sum_{p \leq T} \frac{X_p}{p^{1/2+ih}} \geq  \log\log T - (3/4 + \epsilon)\log\log\log T  \\
\text{and}\quad &  \max_{|h| \leq 1} \Re \sum_{p \leq T} \frac{X_p}{p^{1/2+ih}} \leq  \log\log T - (3/4 - \epsilon)\log\log\log T .
\end{align*}
\end{theorem}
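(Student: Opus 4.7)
The plan is to treat the random field $S(h) := \Re \sum_{p \leq T} X_p/p^{1/2+ih}$, $|h| \leq 1$, as an approximate branching random walk (BRW) of depth $n := \log\log T$. Following the decomposition~\eqref{decompeq}, write $S(h) = \sum_{k=0}^n Y_k(h)$ with $Y_k(h) := \Re \sum_{e^{e^{k-1}} < p \leq e^{e^k}} X_p/p^{1/2+ih}$. In the iid model the $Y_k$ are exactly independent across $k$ (disjoint prime blocks), each has mean zero and variance $\asymp 1/2$, and (by direct computation analogous to Lemma~\ref{pvarlem}) $\mathrm{Cov}(Y_k(h_1), Y_k(h_2)) \approx 1/2$ for $|h_1 - h_2| \lesssim e^{-k}$ while negligible for $|h_1 - h_2| \gg e^{-k}$. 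Discretise $h$ on a grid $\mathcal{G}$ of mesh $e^{-n} = 1/\log T$ (the continuous and discrete maxima differ by $O(1)$ thanks to a bound on $\sup_h |S'(h)|$). The resulting object is an approximate BRW of depth $n$, branching factor $e$ per level and Gaussian step variance $\sigma^2 = 1/2$; its velocity is $v = \sigma\sqrt{2\log e} = 1$ and its Bramson correction is $-\tfrac{3}{2t^*}\log n$ with $t^* = v/\sigma^2 = 2$, giving the target $\tfrac{3}{4}\log n$.

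For the upper bound at $\alpha = 3/4 - \epsilon$ and $m := n - \alpha\log n$, apply a first-moment-with-barrier argument. Let $S_k(h) := \sum_{j \leq k} Y_j(h)$ and $A(h) := \{S_n(h) > m\} \cap \{S_k(h) \leq k + C\ \forall\, k \leq n\}$. A standard one-scale first moment shows $\p(\exists h,\, k : S_k(h) > k + C) = o(1)$ for $C$ large, so it suffices to bound $\E\#\{h \in \mathcal{G} : A(h)\}$. Exponential tilt at the conjugate parameter $t^* = 2$ converts the drifted barrier event into a centred Gaussian ballot problem, yielding
$$ \p(A(h)) \;\ll\; \frac{\log n}{n^{3/2}}\, n^{2\alpha}\, e^{-n}. $$
Multiplying by $|\mathcal{G}| \asymp e^n$ gives $\E\#\{h : A(h)\} \ll \log n \cdot n^{2\alpha - 3/2} \to 0$ whenever $\alpha < 3/4$, and Markov closes the upper bound.

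For the matching lower bound at $\alpha' = 3/4 + \epsilon$ and $m' := n - \alpha'\log n$, use a \emph{truncated} second moment. Set $N := \#\{h \in \mathcal{G} : S_n(h) \in [m', m'+1],\ S_k(h) \leq k + \eta_k\ \forall k\}$ for a slowly growing barrier $\eta_k$ (e.g.\ $\eta_k = (\log k)^2$, chosen so the ballot structure survives but the first moment does not collapse). The same tilt-and-ballot gives $\E N \asymp \log n \cdot n^{2\alpha' - 3/2} \to \infty$. For the second moment, expand $\E[N^2] = \sum_{h_1, h_2 \in \mathcal{G}} \p(A'(h_1) \cap A'(h_2))$ and split by the decoupling scale $k^* := \lfloor \log(1/|h_1 - h_2|) \rfloor$: for $k \leq k^*$ the walks at $h_1, h_2$ coincide and share a single barrier-and-ballot constraint, while for $k > k^*$ they evolve independently to their endpoints, each with its own barrier. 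A careful ballot-with-barrier calculation shows that the contribution from each $k^*$ is at most $(\E N)^2 / n$ up to lower-order factors, and summing over $k^* \in \{0,1,\dots,n\}$ yields $\E[N^2] \ll (\E N)^2$. Paley--Zygmund then gives $\p(N \geq 1) \gg 1$, and partitioning $[-1,1]$ into $\gg 1$ essentially-independent blocks (say of length $e^{-\sqrt{n}}$) boosts this probability to $1 - o(1)$.

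The hard part is the second moment. The BRW model has sharp cutoffs between scales, whereas the true covariances interpolate smoothly around $|h_1 - h_2| = e^{-k}$; this forces a dyadic handling of intermediate separations, with the ballot factor carefully tracked throughout. The barrier profile $\eta_k$ must balance two demands: flat enough that the first moment is not too small, yet curved enough to prevent the second moment from being dominated by pairs of paths that stay together too long. Minor technical points include quantitative Gaussian approximation of the $Y_k$ at small $k$ (only a handful of primes in each block), handled by a CLT with explicit error, and the continuous-to-discrete reduction via the derivative estimate. Once these are in place the BRW machinery yields precisely the $\tfrac{3}{4}$ constant matching the Fyodorov--Hiary--Keating prediction.
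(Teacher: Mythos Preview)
Your proposal is correct and follows essentially the same approach the paper attributes to Arguin--Belius--Harper: the survey does not give a detailed proof but states that the theorem is proved ``by working explicitly with a decomposition like~\eqref{decompeq}, using methods from the theory of branching random walks,'' which is precisely your scheme (independent scale blocks $Y_k$, first-moment-with-barrier for the upper bound, truncated second moment/Paley--Zygmund for the lower bound, yielding the Bramson $3/4$). Your sketch is in fact considerably more detailed than what the paper records.
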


Strictly speaking, Theorem~\ref{harperthm} is proved for a slightly different sum (with a smooth weight), and both Theorems are proved for slightly different ranges of $h$. But the methods would certainly yield the stated results. Harper~\cite{harperlcz} proves the upper bound in Theorem~\ref{harperthm} using a union bound argument, and proves the lower bound by substituting the logarithmic correlation structure of these sums into general lower bound results for random processes from~\cite{harpergp}. Arguin, Belius and Harper~\cite{abh} prove Theorem~\ref{abharpthm} by working explicitly with a decomposition like \eqref{decompeq}, using methods from the theory of branching random walks. See e.g. Kistler's survey~\cite{kistler} for a description of such methods as applied in many different contexts. Note that the conclusion of Theorem~\ref{abharpthm} exactly agrees, for these randomised prime number sums, with Conjecture~\ref{fhkconj} (although Theorem~\ref{abharpthm} is less precise). So now {\em if} we believe in suitably strong versions of Principle~\ref{bprin1} (so that $\Re \sum_{p \leq T} \frac{1}{p^{1/2+it+ih}}$ behaves like $\Re \sum_{p \leq T} \frac{X_p}{p^{1/2+ih}}$ as $t$ varies) and Principle~\ref{bprin2} (so that $\log|\zeta(1/2+it+ih)|$ is typically close to $\Re \sum_{p \leq T} \frac{1}{p^{1/2+it+ih}}$ as $t$ varies), then we have another strong reason for believing Conjecture~\ref{fhkconj}.

\section{Progress towards the conjecture}
In this section we describe some rigorous theorems about the zeta function that make progress towards Conjecture~\ref{fhkconj}.

\begin{theorem}[Najnudel, 2018]\label{thmnajnudel}
For any real function $g(T)$ that tends to infinity with $T$, we have
$$ \frac{1}{T} \meas\bigl\{0 \leq t \leq T : \max_{|h| \leq 1} \log|\zeta(1/2+it+ih)| \leq \log\log T + g(T) \bigr\} \rightarrow 1 \;\;\; \text{as} \; T \rightarrow \infty . $$

Furthermore, if the Riemann Hypothesis is true then for any $\epsilon > 0$ we have
$$ \frac{1}{T} \meas\bigl\{0 \leq t \leq T : \max_{|h| \leq 1} \log|\zeta(1/2+it+ih)| \geq (1-\epsilon)\log\log T \bigr\} \rightarrow 1 \;\;\; \text{as} \; T \rightarrow \infty . $$
\end{theorem}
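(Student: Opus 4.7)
For the unconditional upper bound, I would combine the Hardy--Littlewood approximation of $\zeta$ by a Dirichlet polynomial with the fact that this polynomial is band-limited in $h$. Setting $D(t, h) := \sum_{n \leq T} n^{-1/2 - it - ih}$, which agrees with $\zeta(1/2+it+ih)$ up to $O(T^{-1/2})$ for $t \in [T, 2T]$ and $|h|\leq 1$, the function $D(t, \cdot)$ is a linear combination of the characters $e^{-ih\log n}$ with frequencies in $[0, \log T]$. A Bernstein-type inequality for band-limited functions then gives
\[ \max_{|h|\leq 1}|D(t, h)|^{2} \ll \log T \int_{|h|\leq 2}|D(t, h)|^{2}\,dh . \]
Integrating over $t \in [T, 2T]$ and using the classical mean square $\int_{T}^{2T}|D(t,h)|^{2}\,dt \ll T\log T$ uniformly in $|h|\leq 2$, the integrated right-hand side is $\ll T(\log T)^{2}$. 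Markov's inequality therefore yields
\[ \frac{1}{T}\meas\bigl\{t\in [T, 2T] : \max_{|h|\leq 1}|\zeta(1/2+it+ih)| > e^{V}\bigr\} \ll \frac{(\log T)^{2}}{e^{2V}} , \]
and setting $V = \log\log T + g(T)$ makes this $O(e^{-2g(T)}) = o(1)$ as $g(T)\to\infty$, as required (summing over dyadic ranges of $t$ to cover $[0,T]$).

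For the RH-conditional lower bound, I would first strengthen Proposition~\ref{radzsoundapprox}, assuming RH, to an approximation valid on the critical line itself and uniformly in $|h|\leq 1$ for most $t \in [T, 2T]$:
\[ \log|\zeta(1/2+it+ih)| = Z(t, h) + O(1), \qquad Z(t, h) := \Re\sum_{p\leq P}\frac{1}{p^{1/2+it+ih}} , \]
with $P = T^{1/(\log\log\log T)^{2}}$. The prime powers with $k\geq 2$ contribute only $O(1)$, and the shift $W/\log T$ in Proposition~\ref{radzsoundapprox} can be removed under RH via standard bounds on $\log\zeta$ in a thin zero-free strip combined with a derivative estimate. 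The problem then reduces to showing that, for any fixed $\epsilon>0$, $\max_{|h|\leq 1} Z(t, h) \geq (1-\epsilon)\log\log T$ for most $t \in [T, 2T]$.

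To prove this I would exploit the multi-scale decomposition \eqref{decompeq}, writing $Z = \sum_{0\leq k\leq K} Z_{k}$ with $K \approx \log\log T$, each $Z_{k}$ the prime sum over $(e^{e^{k-1}}, e^{e^{k}}]$, of variance $\approx 1/2$ by Lemma~\ref{pvarlem}, and essentially constant in $h$ on scales $\leq e^{-e^{k}}$. By Principle~\ref{bprin1} and Lemma~\ref{bcorrlem}, the pieces $(Z_{k})_{k}$ behave as independent processes in $t$, which is exactly the log-correlated/branching-random-walk structure of the Fyodorov--Hiary--Keating problem. I would then apply a truncated Paley--Zygmund argument to
\[ I_{\beta}(t) := \int_{|h|\leq 1} e^{2\beta Z(t,h)}\,\mathbf{1}_{\mathcal{G}(t,h)}\,dh , \]
where $\mathcal{G}(t,h)$ enforces that the running sums $\sum_{j\leq k} Z_{j}(t, h)$ stay below a linear barrier $(1-\delta)k$ for every $k\leq K$. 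Using Lemma~\ref{bcorrlem} one computes $\E I_{\beta}(t)$ and $\E I_{\beta}(t)^{2}$; with $\beta$ just below its critical value (of order $1$ in this normalisation, once the barrier has absorbed the singular close-pair contribution) and $\delta$ small, the second moment is comparable to the square of the first, so Paley--Zygmund gives $I_{\beta}(t)\geq c\,\E I_{\beta}(t)$ on a set of $t$ of positive density, forcing $\max_{|h|\leq 1} Z(t, h) \geq (1-\epsilon)\log\log T$ on the same set.

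The main obstacle is this truncated second moment step. The untruncated $L^{2}$ method saturates well below $\log\log T$, because the singular close-pair correlation $(1/2)\log(1/|h_{1}-h_{2}|)$ makes $\E e^{2\beta(Z(h_{1})+Z(h_{2}))}$ blow up as $\beta$ approaches the critical value; the barrier event $\mathcal{G}(t,h)$, together with a ballot-type estimate for the random walks built from the $Z_{j}$, must be engineered to kill this close-pair contribution without losing significantly on the first moment. This branching-random-walk-style analysis, parallel to that underlying Theorem~\ref{abharpthm} (and carried out rigorously for the zeta function under RH via the multi-scale structure just described), is the substantive content of Najnudel's argument.
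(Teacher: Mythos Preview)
Your upper bound argument is correct and essentially the same as the paper's: the Bernstein/Nikolskii-type inequality you invoke plays the same role as the Sobolev--Gallagher discretisation used by Najnudel and by Arguin--Belius--Bourgade--Radziwi\l\l--Soundararajan, and both approaches finish via Markov and the classical second moment $\int_{0}^{T}|\zeta(1/2+it)|^{2}\,dt\sim T\log T$.

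Your lower bound strategy has a genuine gap at the step where you pass from zeta to prime sums. You propose to strengthen Proposition~\ref{radzsoundapprox} under RH to an approximation
\[
\log|\zeta(1/2+it+ih)| = Z(t,h) + O(1)
\]
valid on the critical line \emph{uniformly in $|h|\leq 1$} for most $t\in[T,2T]$. This is impossible: for every $t\in[T,2T]$, the segment $\{1/2+it+ih: |h|\leq 1\}$ typically contains $\asymp\log T$ zeros of $\zeta$, at each of which the left side is $-\infty$ while $Z(t,h)$ stays finite. Your suggested mechanism --- ``standard bounds on $\log\zeta$ in a thin zero-free strip combined with a derivative estimate'' --- fails because under RH there is no zero-free strip containing the critical line; the integral $\int_{1/2}^{1/2+W/\log T}\tfrac{\zeta'}{\zeta}(\sigma+it+ih)\,d\sigma$ is not $O(1)$ uniformly in $h$, its size being governed by the distance to the nearest zero.

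This is exactly the obstacle that Najnudel's argument is built to circumvent. He does \emph{not} prove any two-sided approximation on the line; instead he proves the one-sided pointwise inequality~\eqref{najlower} by exploiting
\[
\max_{|h-h_0|\leq 1/\log^{0.99}x}\log|\zeta(1/2+it+ih)| \;\geq\; \frac{\log^{0.99}x}{2}\int_{|h-h_0|\leq 1/\log^{0.99}x}\log|\zeta(1/2+it+ih)|\,dh,
\]
and showing (under RH) that the short $h$-average on the right is well approximated by the corresponding average of the prime sum. The averaging over an interval long compared to the mean zero spacing $\asymp 1/\log t$ integrates out the logarithmic singularities at the zeros, which is the key idea you are missing. (The alternative, used by Arguin--Belius--Bourgade--Radziwi\l\l--Soundararajan, is to stay at $\Re s = 1/2 + W/\log T$ where Proposition~\ref{radzsoundapprox} does hold, and then transfer large values back to the critical line by a separate derivative argument; but that is not what you wrote.)

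Once the connection to prime sums is established correctly, your proposed multi-scale/truncated second moment analysis for $\max_{|h|\leq 1}Z(t,h)$ is indeed the right shape, and parallels what Najnudel (and Arguin--Belius--Harper in the random model) carry out.
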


\begin{theorem}[Arguin, Belius, Bourgade, Radziwi\l\l, Soundararajan, 2019]\label{thmabbrs}
Najnudel's Theorem is true without the need to assume the Riemann Hypothesis.
\end{theorem}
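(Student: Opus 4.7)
Najnudel already establishes the unconditional upper bound and the lower bound under RH, so the content of Theorem~\ref{thmabbrs} is the \emph{unconditional} lower bound
$$ \max_{|h| \leq 1} \log|\zeta(1/2+it+ih)| \geq (1-\epsilon)\log\log T $$
for most $t \in [0,T]$. My plan is to work directly on the critical line via an unconditional Dirichlet polynomial approximation to $\zeta(1/2+it+ih)$, and then apply a branching-random-walk style second moment argument to $|\zeta|^{2}$ itself. Working with $|\zeta|^{2}$ (which is non-negative) rather than $\log|\zeta|$ (which can dip to $-\infty$ at off-line zeros one cannot rule out without RH) is what lets us bypass RH.

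First I would discretise: set $V = (1-\epsilon)\log\log T$ and $h_{j} = j/\log T$ for $|j| \leq \log T$, so that it suffices to exhibit some $j$ with $|\zeta(1/2+it+ih_{j})| \geq e^{V}$. The approximate functional equation provides an unconditional Dirichlet polynomial approximation of $\zeta(1/2+it+ih_{j})$ of length $\asymp \sqrt{T}$. As in the proof of Proposition~\ref{radzsoundapprox}, I would then introduce a mollifier $M_{j}(t)$, a short Dirichlet polynomial designed to cancel the effect of a short Euler product, so that $|\zeta(1/2+it+ih_{j}) M_{j}(t)|^{2}$ is essentially a Dirichlet polynomial of length $T^{o(1)}$ whose first and second moments over $t \in [T,2T]$ can be evaluated via Lemma~\ref{bcorrlem} and match those in the random prime model $(X_{p})$.

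The heart of the argument is a second moment method. Let $\mathcal{B}_{j}(t)$ denote the ``ballot event'' that the partial sums $\sum_{l \leq k} S_{l}(t,h_{j})$ of the dyadic prime decomposition~\eqref{decompeq} stay below a barrier $B_{k} \sim k/2$ for each $k \leq K = \log\log T$. Consider
$$ N(t) = \sum_{j} |\zeta(1/2+it+ih_{j})|^{2} \, |M_{j}(t)|^{2} \, \mathbf{1}[\mathcal{B}_{j}(t)] . $$
The role of $\mathcal{B}_{j}$ is the one familiar from branching random walks: it restricts attention to the ``typical trajectories'' that produce large values of $\log|\zeta|$, and this is what keeps $\E N(t)^{2}$ comparable to $(\E N(t))^{2}$. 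With ballot/random-walk estimates of the type behind Theorem~\ref{abharpthm}, one obtains $\E N(t) \gg 1$ and $\E N(t)^{2} \ll (\E N(t))^{2}$; by Paley--Zygmund, $\p(N(t) > 0) \gg 1$, and an iteration over disjoint sub-intervals of $[0,T]$ upgrades this to probability $1-o(1)$.

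The main obstacle is the second moment. Unlike in the random model, the correlation between $|\zeta M_{j}|^{2}$ and $|\zeta M_{j'}|^{2}$ must be extracted by expanding a product of four Dirichlet series, applying Lemma~\ref{bcorrlem} term by term, and recognising the result as a ballot probability for a two-dimensional log-correlated random walk whose decorrelation time is $\log(1/|h_{j}-h_{j'}|)$, exactly as anticipated by the correlation computation of Section~3. The delicate point is that $\E N(t)^{2}$ is on the edge of being $O((\E N(t))^{2})$: a weaker ballot event would blow up the second moment, a stronger one would annihilate the first. Striking this balance while keeping the mollified Dirichlet polynomial approximations valid unconditionally on the critical line is the essential contribution of Arguin--Belius--Bourgade--Radziwi\l\l--Soundararajan.
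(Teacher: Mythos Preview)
Your proposal diverges from what the paper describes. Arguin--Belius--Bourgade--Radziwi\l\l--Soundararajan do \emph{not} run a second-moment argument on $|\zeta|^{2}$ on the critical line. Instead they shift slightly \emph{off} the line, to $\sigma = 1/2 + W/\log T$, and invoke a variant of Proposition~\ref{radzsoundapprox}: there, unconditionally, $\zeta(\sigma+it)\approx\exp\bigl(\sum_{p\leq P}p^{-\sigma-it}\bigr)$ for most $t$ (and, after a Sobolev--Gallagher step, uniformly on most intervals $[t-1,t+1]$). The branching-random-walk argument is then carried out on the prime sums themselves, exactly as in Theorem~\ref{abharpthm}, using moment computations in the spirit of Lemma~\ref{bcorrlem}; this produces large values of $|\zeta|$ just off the line, which are transferred back to $\sigma=1/2$ by an averaged derivative bound. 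The shift off the line is precisely what replaces Najnudel's appeal to RH, so your identification of the ``essential contribution'' is misplaced.

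Your own scheme also has a genuine gap. With $N(t)=\sum_j |\zeta M_j|^{2}\,\mathbf{1}[\mathcal{B}_j]$, Paley--Zygmund gives $N(t)>0$ for many $t$, but that only says some $\mathcal{B}_j$ holds with $\zeta\neq 0$ there: the mollifier is built so that $|\zeta M_j|^{2}\approx 1$ in mean square, so this weight carries no information about $|\zeta|$ itself being large. Passing from ``the prime partial sums at $h_j$ end near $V$'' to ``$|\zeta(1/2+it+ih_j)|\approx e^{V}$'' requires exactly the Euler-product approximation that fails pointwise on the critical line, and whose failure is the reason Najnudel invoked RH. Moreover, expanding $\E N(t)^{2}$ produces twisted \emph{fourth} moments of $\zeta$ on the line, bringing in extra powers of $\log T$ that your sketch does not address. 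The off-line shift sidesteps both issues: once $\log|\zeta|$ has been replaced by a genuine prime sum, all moment computations reduce to the prime correlations of Lemma~\ref{bcorrlem}, with no $|\zeta|^{4}$ in sight.
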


Before we turn to the proofs of these results, we make a few explanatory remarks. Najnudel's paper~\cite{najnudel} appeared in preprint form on the arXiv in November 2016, and the independent paper of Arguin, Belius, Bourgade, Radziwi\l\l \; and Soundararajan~\cite{abbrs}, which didn't require the assumption of the Riemann Hypothesis, was posted to the arXiv in December 2016. Najnudel proves analogous results (assuming RH) for the imaginary part of $\log\zeta(1/2+it+ih)$ as well. It is possible, but not certain, that some of these could also be made unconditional using the methods of Arguin, Belius, Bourgade, Radziwi\l\l \; and Soundararajan.

\medskip
The upper bounds in Theorems~\ref{thmnajnudel} and~\ref{thmabbrs} are much easier than the lower bounds, and aside from differences in detail are proved in similar ways. Essentially the same argument was also sketched at the end of the introduction to the author's preprint~\cite{harperlcz}. If we looked at a discrete maximum over points $h = j/\log T$ with $|j| \leq \log T$, instead of the maximum over a continuous interval $|h| \leq 1$, we could argue that
\begin{align*}
& \frac{1}{T} \meas\Bigl\{0 \leq t \leq T : \max_{|j| \leq \log T} \log\Bigl|\zeta\Bigl(1/2+it+i\frac{j}{\log T}\Bigr)\Bigr| > \log\log T + g(T) \Bigr\}  \\
& \leq  \sum_{|j| \leq \log T} \frac{1}{T} \meas\Bigl\{0 \leq t \leq T : \log\Bigl|\zeta\Bigl(1/2+it+i\frac{j}{\log T}\Bigr)\Bigr| > \log\log T + g(T) \Bigr\}  \\
& \leq  \sum_{|j| \leq \log T} \frac{1}{T} \int_{0}^{T} \frac{|\zeta(1/2+it)|^2}{e^{2(\log\log T + g(T))}} dt . 
\end{align*}
It is a classical result of Hardy and Littlewood that $\int_{0}^{T} |\zeta(1/2+it)|^2 dt \sim T\log T$ as $T \rightarrow \infty$, so the right hand side is $\ll e^{-2g(T)}$, which indeed tends to $0$ as $T \rightarrow \infty$. To pass from the continuous maximum to the discrete maximum, one can just use classical analytic techniques such as the Sobolev--Gallagher inequality (essentially estimating the average size of the derivative of $\zeta(1/2+it)$). See e.g.\ the paper of Arguin--Belius--Bourgade--Radziwi\l\l--Soundararajan~\cite{abbrs}. Note that this argument is really quite similar to the heuristic one we gave before, with the second moment asymptotic for the zeta function (which is an exponential moment calculation for $\log|\zeta(1/2+it)|$) providing the necessary large deviation estimate for $\log|\zeta(1/2+it)|$. The fact that we don't get the extra subtracted term $-(1/4)\log\log\log T$ in the rigorous argument reflects a standard inefficiency when bounding large deviation probabilities/measures using exponential moments. 

\medskip
To prove the lower bound in Theorem~\ref{thmnajnudel}, Najnudel's main number theoretic input is a striking estimate of the following shape: if the Riemann Hypothesis is true, and if $t$ is large and $1 \leq x \ll t$ is a parameter, then
\begin{equation}\label{najlower}
\max_{|h| \leq 1} \log|\zeta(1/2+it+ih)| \gtrsim \max_{|h| \leq 1/2} \Re \sum_{p \leq x} \frac{1}{p^{1/2+it+ih}} + O\Bigl(\frac{\log t}{(\log x)^{C}} + \frac{x^C}{t} \Bigr) .
\end{equation}
The reader should compare this with Soundararajan's upper bound \eqref{soundupper}. The correct statement of this lower bound is a bit more complicated, in particular the sum $\sum_{p \leq x} \frac{1}{p^{1/2+it+ih}}$ should really be an infinite sum with a smooth cutoff that decays when $p > x$, and there is some contribution from prime squares as well. But to get an idea of the argument one can just think of \eqref{najlower}.

As in many similar situations (e.g.\ Soundararajan's~\cite{soundmoments} proof of \eqref{soundupper}), Najnudel assumes the Riemann Hypothesis when proving \eqref{najlower} to avoid the appearance of other large terms corresponding to possible zeros of the zeta function off the critical line. This reflects the general duality between prime numbers being well distributed, Euler product type formulae roughly holding, and the zeros of the zeta function being well behaved, as discussed at the very beginning of this paper. The other important thing to note here is the role played by the maximum over $h$. We have remarked several times that it would be impossible to prove a pointwise lower bound comparable to \eqref{soundupper} or \eqref{najlower}, because at a zero of the zeta function the prime number sum is finite but log zeta becomes undefined. Roughly speaking, in the course of proving \eqref{najlower} Najnudel exploits the fact that
$$ \max_{|h| \leq 1/\log^{0.99}x} \log|\zeta(1/2+it+ih)| \geq \frac{\log^{0.99}x}{2} \int_{|h| \leq 1/\log^{0.99}x} \log|\zeta(1/2+it+ih)| dh . $$
On the one hand, one can cover the interval $|h| \leq 1$ by small intervals of length $2/\log^{0.99}x$ (with a small error at the ends, hence the change to the interval $|h| \leq 1/2$ on the right hand side of \eqref{najlower}), and hope that replacing the maximum in each small interval by its average (whilst still taking the maximum {\em over} all the intervals) won't reduce the size too much. On the other hand, since an interval of length $2/\log^{0.99}x$ is large compared with the average spacing $\asymp 1/\log t$ of zeta zeros with imaginary part around $t$, by integrating over such an interval one smooths out (and removes the effect of) the blow-up at the zeros.

The inequality \eqref{najlower} is the manifestation of Principle~\ref{bprin2} in Najnudel's argument. Having passed to prime number sums, with some flexibility in the choice of the length $x$, Najnudel shows that they behave like sums of independent random variables (Principle~\ref{bprin1}) by moment calculations, similarly as discussed following Lemma~\ref{bcorrlem}. Thus he can argue about the size of $\max_{|h| \leq 1/2} \Re \sum_{p \leq x} \frac{1}{p^{1/2+it+ih}}$ with a similar style of argument, motivated by branching random walk, as Arguin, Belius and Harper~\cite{abh} used for their randomised model of zeta.

\medskip
For their unconditional lower bound, Arguin, Belius, Bourgade, Radziwi\l\l \; and Soundararajan use a result like Proposition~\ref{radzsoundapprox} to serve as their realisation of Principle~\ref{bprin2}. The choices of $W$ and $P$ are a bit different than in Proposition~\ref{radzsoundapprox}, but the proof is essentially the same as the one we sketched for that proposition. To apply this to give lower bounds for $\max_{|h| \leq 1} \log|\zeta(1/2+it+ih)|$, a couple of other auxiliary manoeuvres are required. Since Proposition~\ref{radzsoundapprox} concerns points slightly off the critical line, one wants to know that for most $t$, if there is a large value slightly off the critical line there will also be one nearby on the critical line. This is swiftly proved using, essentially, an average bound for the size of the derivative of zeta, obtained by manipulating the Hardy--Littlewood approximation $\zeta(1/2 + it) = \sum_{n \leq T} \frac{1}{n^{1/2+it}} + O(\frac{1}{\sqrt{T}})$. Also, whereas Proposition~\ref{radzsoundapprox} supplies information at most individual points $T \leq t \leq 2T$, Arguin--Belius--Bourgade--Radziwi\l\l--Soundararajan need results that hold for most {\em intervals} $[t-1,t+1]$. This extension is obtained by noting that in the proof of Proposition~\ref{radzsoundapprox}, the individual steps (such as the approximation $\zeta(s) M(s) = 1+o(1)$) hold uniformly for most intervals $[t-1,t+1]$, thanks again to classical Sobolev--Gallagher type manipulations.

By shifting a little off the critical line, and only seeking to approximate (the shifted version of) $\max_{|h| \leq 1} \log|\zeta(1/2+it+ih)|$ by prime number sums for {\em most} $t$, Arguin--Belius--Bourgade--Radziwi\l\l--Soundararajan can avoid Najnudel's appeal to the Riemann Hypothesis.

Having reached this stage, moment calculations with the prime number sums again show that they behave like sums of independent random variables (Principle~\ref{bprin1}), and one can conclude with a branching random walk style argument.

\medskip
We finish with a glance at what remains to be done to prove Conjecture~\ref{fhkconj}. Both Theorems~\ref{thmnajnudel} and~\ref{thmabbrs} are less precise than the conjecture, but it seems quite reasonable to think that the methods have not yet been fully perfected, so that more precise results could be extracted. On the other hand, to increase the precision in these methods one needs to approximate the zeta function by prime number sums that are longer, and at points that are closer to the critical line. At a certain point the influence of the zeta zeros, and (more technically) of off-diagonal terms that would start to appear in the analysis, obstructs progress. Because the scale $\log\log T$ on which one is working grows so slowly with $T$, one has quite a lot of flexibility in truncating sums, etc. if one just wants to get close to the answer, but this starts to disappear if one wants a precise answer.

One particular landmark en route to proving Conjecture~\ref{fhkconj}, which might be achievable, would be to prove that usually $\max_{|h| \leq 1} \log|\zeta(1/2+it+ih)| \leq \log\log T - c\log\log\log T$ for some $c > 1/4$. The Conjecture predicts that one can take $c = 3/4 + o(1)$, whereas we have seen (in our initial heuristic argument) that one would get $c = 1/4 + o(1)$ if $|\zeta(1/2+it+ih_1)|, |\zeta(1/2+it+ih_2)|$ behaved ``roughly independently'' when $|h_1 - h_2| > 1/\log T$. We saw in our later analysis that this shouldn't really be the case, and proving an upper bound with some fixed $c > 1/4$ would give a concrete (if rather subtle) manifestation of this failure of independence.

\medskip
{\em Acknowledgements.} The author would like to thank Louis-Pierre Arguin, Paul Bourgade, and N.~Bourbaki for their comments and suggestions on a draft of this paper.


\end{document}